\theoremstyle{definition}
\newtheorem{df}{Definition} [section]
\theoremstyle{plain}
\newtheorem{thm}[df]{Theorem}
\newtheorem{lemma}[df]{Lemma}
\newtheorem{cor}[df]{Corollary}
\newtheorem{obs}[df]{Observation}
\newtheorem{problem}[df]{Problem}
\title{The largest angle bisection procedure}
\author{Dan Ismailescu}
\address{Mathematics Department, Hofstra University, Hempstead, NY 11549}
\email{dan.p.ismailescu@hofstra.edu}
\author{Joehyun Kim}
\address{Fort Lee High School, NJ 07024}
\email{joehyunkim5@gmail.com}
\author{Kelvin Kim}
\address{Bergen Catholic High School, NJ 07649}
\email{kelvin2kim@gmail.com}
\author{Jeewoo Lee}
\address{Townsend Harris High School, NY 11367}
\email{jlee1397@townsendharris.org}
\begin{document}

\thispagestyle{empty}

\begin{abstract}

The {\it largest angle bisection} procedure is the operation which partitions a given triangle, $T$, into two smaller triangles by constructing the angle bisector of the largest angle of $T$. Applying the procedure to each of these two triangles produces a partition of $T$ into four smaller triangles. Continuing in this manner, after $n$ iterations, the initial triangle is divided into $2^n$ small triangles.
We prove that as $n$ approaches infinity, the diameters of all these $2^n$ triangles tend to $0$, the smallest angle of all these triangles is bounded away from $0$, and that, with the exception of $T$ being an isosceles right triangle, the number of dissimilar triangles is unbounded.

\end{abstract}

\maketitle
\thispagestyle{empty}
\pagenumbering{arabic}


\begin{section}{\bf Background and motivation}

For a given triangle, locate the midpoint of the longest side and then connect this point to the vertex of the
triangle opposite the longest side. In other words, in any given triangle draw the shortest median.
This construction is known as the {\it longest edge bisection} procedure and was first considered in 1975 by
Rosenberg and Stenger \cite{rosenbergstenger}.

Let $\Delta_{01}$ be a given triangle. Bisect $\Delta_{01}$ into two triangles $\Delta_{11}$ and $\Delta_{12}$ according to the procedure defined above. Next, bisect each $\Delta_{1i}$, $i=1,2$, forming four new triangles $\Delta_{2i}$, $i=1,2,3,4$. Continue in this fashion. For every nonnegative integer $n$ set $T_n=\{\Delta_{ni}: \,1\le i\le 2^n\}$, so $T_n$ is the set of $2^n$ triangles  created in the $n$-th iteration. Please refer to figure \ref{threeit} for an illustration of this process for $n=3$.
\begin{figure}[!htb]
\centering
\includegraphics[scale=0.85]{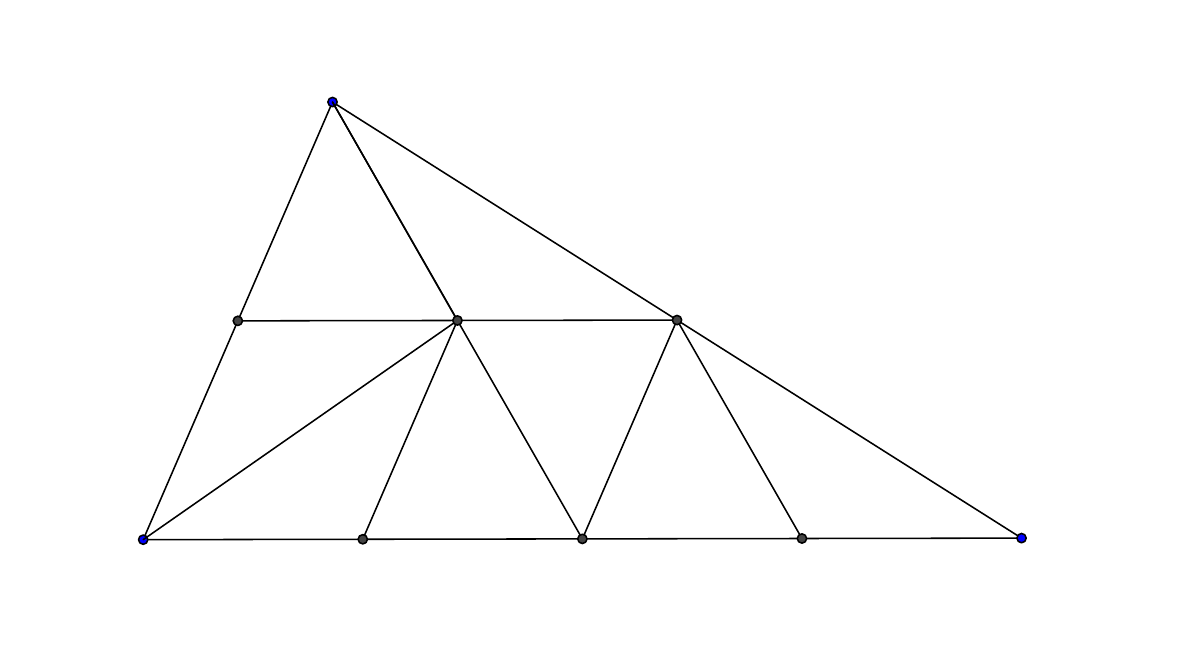}
\vspace{-0.5cm}
\caption{The longest edge bisection procedure: the first three iterations}
\label{threeit}
\end{figure}

Define $m_n$, the {\it mesh} of $T_n$, to be the length of the longest side among the sides of all triangles in $T_n$.
Similarly, let $\gamma_n$ be the smallest angle among the angles of the triangles in $T_n$.

Motivated by possible applications to the finite element method, Rosenberg and Stenger considered the following:
\begin{problem}\label{mainproblem}
\begin{itemize}
\item[(a)]{Is it true that $\gamma_n$ is bounded away from $0$ as $n\rightarrow \infty?$}
\item[(b)]{Is it true that $m_n$ approaches $0$ as $n\rightarrow \infty$?}
\item[(c)]{ Does the family $\bigcup_{n=0}^{\infty}T_n$ contains finitely many triangle types?}
\end{itemize}
\end{problem}

Based on figure \ref{threeit} it is reasonable to expect the answer to the first two questions to be affirmative.
Indeed, the first question was answered by Rosenberg and Stenger themselves.
\begin{thm}\cite{rosenbergstenger}
With the notations above we have that
\begin{equation*}
\gamma_n\ge \arctan \left(\frac{\sin{\gamma_0}}{2-\cos{\gamma_0}}\right)\ge \gamma_0/2,
\end{equation*}
where $\gamma_0$ is the smallest angle of the initial triangle $\Delta_{01}$. Equality holds when
$\Delta_{01}$ is an equilateral triangle.
\end{thm}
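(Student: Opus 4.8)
The plan is to reduce the whole process to a single application of the procedure and then to control how the smallest angle propagates down the subdivision tree; throughout write $f(x)=\arctan\!\frac{\sin x}{2-\cos x}$ for the quantity in the statement. Consider any triangle with angles $A\le B\le C$ and opposite sides $a\le b\le c$. Since $C$ is the largest angle, $c$ is the longest side, and one step joins its midpoint $M$ to the opposite vertex, splitting $C$ into two parts $\theta_1\le\theta_2$ with $\theta_1+\theta_2=C$. Writing the law of sines in the two subtriangles, which have equal bases of length $c/2$, gives the fundamental relation $\sin B\,\sin\theta_1=\sin A\,\sin\theta_2$; in particular $\tfrac{\sin\theta_1}{\sin\theta_2}=\tfrac{\sin A}{\sin B}\le1$ and $\tan\theta_1=\frac{\sin A\,\sin C}{\sin B+\sin A\cos C}$. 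The two new angles created at $M$ are $A+\theta_1$ and $B+\theta_2$, so among all newly created angles the smallest is $\theta_1$; since the two inherited angles are $A$ and $B\ge A$, the smallest angle appearing in the two children is $\min(A,\theta_1)$.

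First I would prove the one-step estimate $\theta_1\ge f(A)$. As $\tan$ is increasing on $(0,\tfrac\pi2)$ and both $\theta_1$ and $f(A)$ lie there, this is equivalent to $\frac{\sin A\sin C}{\sin B+\sin A\cos C}\ge\frac{\sin A}{2-\cos A}$; clearing the (positive) denominators, cancelling $\sin A$, and using $\sin(A+C)=\sin B$ collapses it to the single inequality $\sin C\ge\sin B$. Because $C$ is the largest angle of the triangle this always holds (with equality iff $B=C$), so $\theta_1\ge f(A)$. Applied to $\Delta_{01}$ this gives $\theta_1(\Delta_{01})\ge f(\gamma_0)$.

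The real difficulty is that the smallest angle is not monotone: one bisection can create an angle below $\gamma_0$, so merely iterating the one-step estimate would give the useless bound $\gamma_n\ge f^{\,n}(\gamma_0)\to0$. To obtain a uniform bound I would track the potential $g(\Delta)=\min(\phi(\Delta),\theta_1(\Delta))$, where $\phi$ is the smallest angle of $\Delta$ and $\theta_1$ its smaller split, and show that $g$ does not decrease from a triangle to either child. Indeed the child carrying the angle $B+\theta_2\ (\ge90^\circ)$ at $M$ has angles $A,\theta_1,B+\theta_2$, so its smallest angle is exactly $\min(A,\theta_1)=g(\Delta)$, while the other child has angles $B,\theta_2,A+\theta_1$, each of which is $\ge\min(A,\theta_1)$; hence $\phi\ge g(\Delta)$ for both children. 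The crucial point is that the smaller split of each child is again under control: a computation of exactly the same type as above, applying $\sin B\sin\theta_1=\sin A\sin\theta_2$ inside the child, shows that this smaller split always equals one of the four numbers $A,B,\theta_1,\theta_2$, each of which is $\ge\min(A,\theta_1)=g(\Delta)$. Combining, $g(\text{child})\ge g(\Delta)$, so $g$ is non-decreasing along every branch of the tree. Every triangle $\Delta_{ni}$ therefore satisfies $\phi(\Delta_{ni})\ge g(\Delta_{ni})\ge g(\Delta_{01})=\min\bigl(\gamma_0,\theta_1(\Delta_{01})\bigr)\ge f(\gamma_0)$, using the base estimate above and $\gamma_0\ge f(\gamma_0)$; taking the minimum over $i$ yields $\gamma_n\ge f(\gamma_0)$.

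It remains to prove $f(\gamma_0)\ge\gamma_0/2$. Since a smallest angle satisfies $\gamma_0\le60^\circ$, this follows from the elementary fact that $\arctan\!\frac{\sin x}{2-\cos x}-\tfrac x2$ vanishes at $x=0$ and $x=60^\circ$ and is positive in between. For the equilateral triangle $\gamma_0=60^\circ$, and here $f(60^\circ)=30^\circ=\gamma_0/2$; the first step produces two triangles with angles $30^\circ$, $60^\circ$, $90^\circ$, after which the smallest angle remains $30^\circ$, so both inequalities are sharp. I expect the main obstacle to be precisely the monotonicity in the third paragraph: recognizing that the naive smallest-angle bound is too weak, choosing the potential $g$, and carrying out the case analysis that the smaller split of each child is one of the parent's four angles $A,B,\theta_1,\theta_2$.
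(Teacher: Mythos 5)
The paper only quotes this theorem from the Rosenberg--Stenger reference and contains no proof of it, so there is nothing internal to compare against; I am judging your argument on its own terms.

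Your setup and one-step estimate are correct: the relation $\sin B\sin\theta_1=\sin A\sin\theta_2$, the formula $\tan\theta_1=\frac{\sin A\sin C}{\sin B+\sin A\cos C}$, the reduction of $\theta_1\ge f(A)$ to $\sin C\ge\sin B$, and the elementary inequality $f(x)\ge x/2$ for $x\le 60^{\circ}$ all check out, and tracking the potential $g=\min(\phi,\theta_1)$ is a sensible strategy. The gap is exactly in the step you flag as crucial: the claim that the smaller split of each child always equals one of the parent's four angles $A,B,\theta_1,\theta_2$. This is true for the child with angles $A,\theta_1,B+\theta_2$: its largest angle $B+\theta_2\ge 90^{\circ}$ sits at the new vertex $M$, the median from $M$ is a midline of the parent (parallel to the side $a$), and it splits $B+\theta_2$ into exactly $B$ and $\theta_2$. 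But the other child, with angles $B,\theta_2,A+\theta_1$, need not have its largest angle at $M$; it can sit at one of the two original vertices, and then the new median is not a midline of the parent and the two parts of the bisected angle are genuinely new. Concretely, take $(A,B,C)=(10^{\circ},20^{\circ},150^{\circ})$, so $\theta_1\approx 24.4^{\circ}$ and $\theta_2\approx 125.6^{\circ}$. The second child has angles $20^{\circ},\,125.6^{\circ},\,34.4^{\circ}$, its largest angle is $\theta_2$ at the old vertex carrying $C$, and the median from that vertex splits $\theta_2$ into approximately $37.3^{\circ}$ and $88.3^{\circ}$ --- neither of which is among $10^{\circ},20^{\circ},24.4^{\circ},125.6^{\circ}$.

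In that example the desired conclusion still happens to hold ($g(\text{child})=\min(20^{\circ},37.3^{\circ})=20^{\circ}\ge 10^{\circ}=g(\text{parent})$), and I expect the monotonicity of $g$ is in fact true, but your justification collapses precisely when the child's largest angle is not at $M$. In that case the only bound you have established for the child's smaller split is the one-step estimate $\theta_1'\ge f(\phi(\text{child}))\ge f(g(\text{parent}))$, and since $f(x)\le x$ this is strictly weaker than the $\theta_1'\ge g(\text{parent})$ that the induction requires --- this is exactly the iteration trap you yourself identified in your second paragraph. The missing case (child whose largest angle sits at an original vertex) still has to be handled by a genuine argument, and this is where the real content of Rosenberg and Stenger's proof lies; as written, the proof is incomplete.
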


As mentioned earlier, the theorem is of interest if the mesh in the finite-element approximation of solutions of differential equations is refined in the described manner; the convergence criterion of the method is that the angles of the triangles do not tend to zero.

In 1890 Schwarz \cite{schwarz} surprised the mathematical community by providing and explicit example of a situation in which triangles are used to approximate the area of a cylinder. In this case, the sum of the areas of the triangles may not converge to the area of the cylinder as the size of each triangle approaches zero, and the number of triangles approaches infinity, if the smallest interior angle of each triangle approaches zero.

The second question was answered by Kearfott \cite{kearfott} a few years later.
\begin{thm}\cite{kearfott} Let $m_n$ be the length of the longest side among the sides of all $n$th generation triangles obtained
by applying the longest edge bisection procedure. Then
\begin{equation*}
m_n\le m_0\cdot \left(\frac{\sqrt{3}}{2}\right)^{\lfloor{\frac{n}{2}}\rfloor}\,\,\text{and therefore}\,\,m_n\rightarrow 0 \,\,\text{ as}\,\, n\rightarrow \infty.
\end{equation*}
\end{thm}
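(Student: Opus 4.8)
The plan is to prove the sharper, self-propagating statement that two applications of the procedure shrink the longest edge by at least the factor $\sqrt3/2$; iterating this estimate then yields the stated bound. The one computational ingredient I would use is the classical median-length formula: if a triangle has sides $a,b,c$ opposite the vertices $A,B,C$ and we bisect the side $a=BC$ at its midpoint $M$, then the newly drawn median has length
\[
AM^2=\frac{2b^2+2c^2-a^2}{4}.
\]

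First I would record two elementary consequences. Suppose $a$ is the longest side of the triangle being bisected, so that $b,c\le a$. Feeding this into the median formula gives $AM^2\le (2a^2+2a^2-a^2)/4=3a^2/4$, hence $AM\le (\sqrt3/2)\,a$. The two children of the bisection inherit, apart from this median, only a half $a/2$ of the bisected side and one of the original sides $b$ or $c$; since each of these is $\le a$, every side of every child is at most the longest side of its parent. Consequently the mesh is non-increasing, $m_{n+1}\le m_n$, and --- because $a/2<(\sqrt3/2)\,a$ and $AM\le (\sqrt3/2)\,a$ --- the only side of a child that can exceed $(\sqrt3/2)\,a$ is the inherited original side.

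The heart of the argument is the two-step bound $m_{n+2}\le (\sqrt3/2)\,m_n$, which I would establish for a single triangle $\Delta\in T_n$ with longest side $L$ and then maximize over $T_n$. Let $\Delta'$ be one child of $\Delta$. If the longest side of $\Delta'$ is already $\le (\sqrt3/2)\,L$, then by monotonicity every grandchild obtained through $\Delta'$ inherits a longest side $\le(\sqrt3/2)\,L$. Otherwise the longest side of $\Delta'$ exceeds $(\sqrt3/2)\,L$, and by the previous paragraph it can only be the original side that $\Delta'$ inherited from $\Delta$; but then that side \emph{is} the longest side of $\Delta'$, so the next bisection cuts it in half (to a length $\le L/2$) while introducing a median of length $\le(\sqrt3/2)\,L$, and again every grandchild has longest side $\le(\sqrt3/2)\,L$. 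Taking the maximum over all $\Delta\in T_n$ gives $m_{n+2}\le(\sqrt3/2)\,m_n$, and a straightforward induction combined with the monotonicity $m_{2k+1}\le m_{2k}$ yields $m_n\le(\sqrt3/2)^{\lfloor n/2\rfloor}\,m_0$, whence $m_n\to0$.

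I expect the genuinely delicate point to be the dichotomy in the two-step estimate: one must argue that a stubborn long side which survives a single bisection is necessarily the longest side of the child and is therefore \emph{forced} to be bisected at the very next step. Pinning the contraction factor to exactly $\sqrt3/2$ rests on the sharp median bound together with the observation that both the half-edge $a/2$ and the median stay at or below $(\sqrt3/2)\,a$, so that an inherited original side is the only possible obstruction to immediate contraction.
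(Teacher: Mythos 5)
Your argument is correct, and it follows exactly the strategy the paper attributes to Kearfott (the paper itself only cites the result and notes that ``Kearfott shows that $m_2\le m_0\cdot\sqrt{3}/2$ and then uses induction''): you establish the two-step contraction $m_{n+2}\le(\sqrt{3}/2)\,m_n$ via the median-length bound $AM\le(\sqrt{3}/2)a$ and the observation that any child side exceeding $(\sqrt{3}/2)a$ must be an inherited original side, hence the unique longest side of the child, hence bisected at the very next step. The details of your dichotomy and the final induction check out.
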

Kearfott shows that $m_2\le m_0\cdot(\sqrt{3}/{2})$ and then uses induction. This rate of convergence was successively improved by Stynes \cite{stynes1} and by Adler \cite{adler} who proved that $m_n\le m_0\cdot \sqrt{3}\cdot 2^{-n/2}$ if $n$ is even and $m_n\le m_0\cdot \sqrt{2}\cdot 2^{-n/2}$ if $n$ is odd, with equality if the initial triangle is equilateral.

Also, both Stynes' and Adler's techniques lead to an answer to the third question: the union $\bigcup_{n=0}^{\infty} T_n$ contains only finitely many triangle shapes (up to similarity).

For a given initial triangle $\Delta_{01}$, it would be interesting to find a formula for the number of different similarity classes generated by the longest edge bisection procedure applied to $\Delta_{01}$, and also an expression for the smallest $N$ such that every triangle in $\bigcup_{n=0}^{\infty}T_{n}$ is similar to some triangle in $\bigcup_{k=0}^{N} T_k$. At the time of this writing, there are several known bounds but these seem rather weak. For details the reader is referred to \cite{iribarren, plaza}.
\end{section}

\begin{section}{\bf The problem and summary of results}

In this paper we consider a different kind of bisection procedure.

{\bf Question.} What if instead of bisecting {\it the longest edge}, we bisect {\it the largest angle}?

For any given triangle, locate the \underline{largest angle} and then construct the \underline{angle bisector} of this angle - see figure \ref{lemma1fig} below.

\begin{figure}[!htb]
\centering
\includegraphics[scale=1]{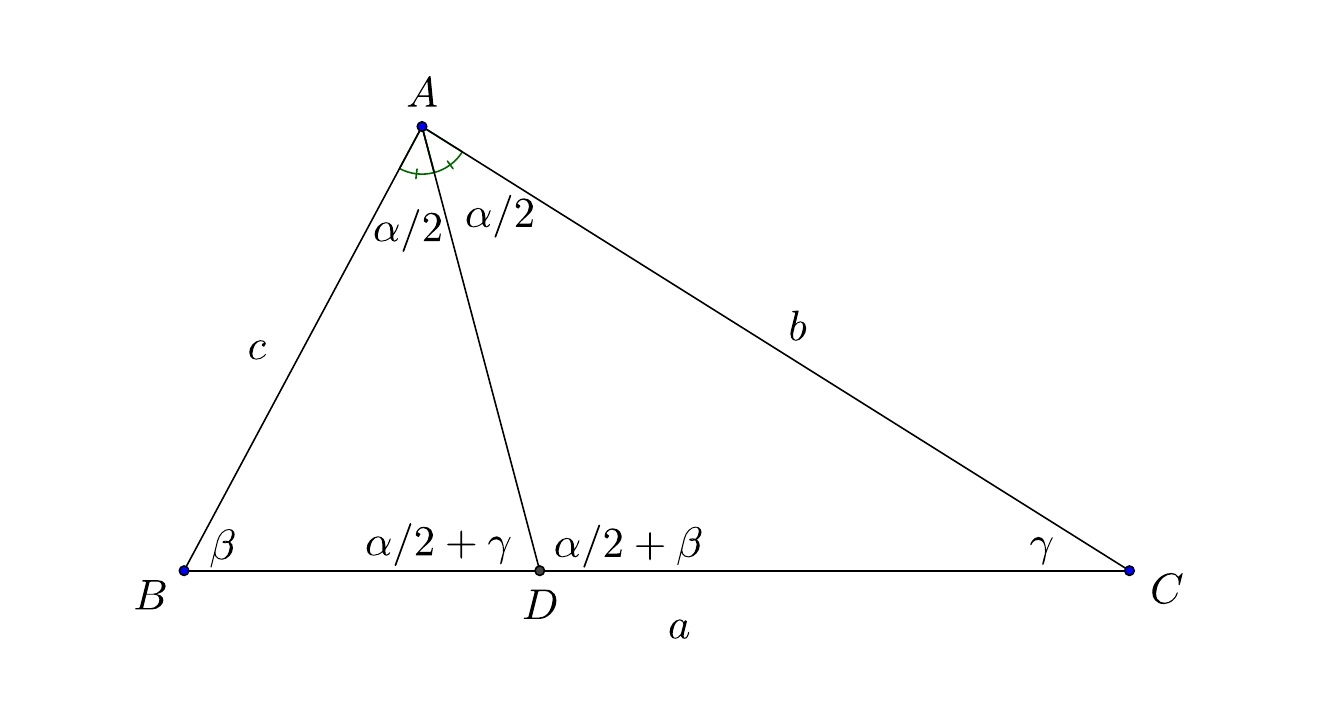}
\vspace{0cm}
\caption{The first iteration of the largest angle bisection procedure. In triangle $ABC$ we have $\alpha\ge \beta\ge \gamma$.}
\label{lemma1fig}
\end{figure}

For each of the two newly formed triangles construct the angle bisectors of their largest angles, and so on. As in the longest edge bisection scenario, let $T_n$ be the set of $2^n$ triangles obtained after the $n$th iteration of this operation, which we are going to call the {\it largest angle bisection} procedure. Also, let $m_n$, the {\it mesh} of $T_n$, to be the length of the longest side among the sides of all triangles in $T_n$ and let $\gamma_n$ be the smallest angle among the angles of the triangles in $T_n$.

It is then natural to ask the same questions as in problem \ref{mainproblem} for this new operation.  Under the assumption of the largest angle bisection procedure we prove the following results.
\begin{align}
&\gamma_n = \min(\gamma, \alpha/2 ), \,\,\text{for all} \,\, n\ge 1. \label{minangle}\\
& m_n\rightarrow 0 \,\, \text{as}\,\, n\rightarrow \infty,\label{mn}\\
&\text{With one exception, the set} \,\, \bigcup_{n=0}^{\infty} T_n \,\, \text{contains infinitely many similarity types}.\label{types}
\end{align}

Notice that results \eqref{minangle} and \eqref{mn} are similar to the ones in the original problem, while result \eqref{types} is different. The remainder of the paper is dedicated to presenting proofs of these statements.

Showing \eqref{minangle} is very easy, and the proof of \eqref{types} is not too difficult, either. However, proving \eqref{mn} is quite challenging. In fact, throughout the next three sections we build the tools needed for showing that $m_n\rightarrow 0$.
Let us start with a simple proof of \eqref{minangle}.
\begin{thm}\label{minanglethm}
Let $\Delta_{01}=ABC$ be an arbitrary triangle with angles $\alpha\ge \beta\ge \gamma$. Apply the largest angle bisection procedure
with $ABC$ as the initial triangle. Then, for all $n\ge 1$ we have that $\gamma_n= \min(\gamma, \alpha/2)$.
\end{thm}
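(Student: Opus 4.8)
The plan is to track exactly what happens to the smallest angle under one bisection, and then argue that after the first step the smallest angle stabilizes. Start with the initial triangle $ABC$ with angles $\alpha \ge \beta \ge \gamma$, so $\alpha$ is the largest angle and $\gamma$ is the smallest. Bisecting the angle $\alpha$ at vertex $A$ by a cevian $AD$ (with $D$ on side $BC$) produces two triangles sharing the angle $\alpha/2$ at $A$. One triangle has angles $\alpha/2$, $\beta$, and $\pi - \alpha/2 - \beta$; the other has angles $\alpha/2$, $\gamma$, and $\pi - \alpha/2 - \gamma$. The key computation is to determine the smallest angle in each of these two children.

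**First I would** observe that the two angles at $D$ are supplementary, so one of them is at most $\pi/2$ and the other at least $\pi/2$; the obtuse one at $D$ cannot be the smallest angle of its triangle, so the candidates for smallest angles come from $\{\alpha/2, \beta, \gamma\}$ together with the acute angle at $D$. Since $\alpha \ge \beta \ge \gamma$ and $\alpha + \beta + \gamma = \pi$, a short case analysis (comparing $\alpha/2$ against $\beta$ and against $\gamma$) should show that the minimum angle appearing among the two children equals $\min(\gamma, \alpha/2)$. The intuition is that $\gamma$ is inherited unchanged in one child, while the freshly created angle $\alpha/2$ is the only genuinely new small angle, and every other new angle is provably at least as large as one of these two.

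**The main step** is then to prove the claim is \emph{self-sustaining}: I must show that once we pass to $T_1$, every subsequent bisection produces no angle smaller than $\min(\gamma,\alpha/2)$, and that this value is actually attained at every level. For the lower bound, I would argue by induction that any triangle arising in the process has all its angles bounded below by $m := \min(\gamma, \alpha/2)$; the induction step is the same local computation as above, now applied to a child triangle whose angles are already $\ge m$, checking that bisecting its largest angle cannot create an angle below $m$. The delicate point is verifying that halving the largest angle of a triangle whose angles are all $\ge m$ never drops below $m$ — this uses that the largest angle is at least $\pi/3 \ge 2m$ is \emph{not} automatic, so the argument must instead exploit the acute angle at the foot of the bisector, which is where I expect the real obstacle to lie.

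**The hard part will be** the lower-bound induction, specifically controlling the acute angle at the foot of the cevian, since unlike the inherited angles it is not simply one of the parent's angles; I anticipate needing the fact that in any triangle with angles $\ge m$, bisecting the largest angle yields a foot-angle that stays $\ge m$, which may require a separate trigonometric lemma. For attainment of the value (the reverse inequality $\gamma_n \le \min(\gamma,\alpha/2)$), I would simply exhibit at each level a triangle still carrying the angle $\gamma$ or the angle $\alpha/2$: whichever of $\gamma$ or $\alpha/2$ is smaller is literally present as an angle of one of the two first-generation triangles, and I would track a descendant chain that never bisects that particular small angle, so it persists forever. Combining the uniform lower bound with this persistent witness gives $\gamma_n = \min(\gamma,\alpha/2)$ for all $n \ge 1$.
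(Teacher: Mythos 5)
Your overall architecture --- a uniform lower bound $m:=\min(\gamma,\alpha/2)$ proved by induction, plus a ``persistent witness'' carrying the angle $\gamma$ (or $\alpha/2$) through every generation for the matching upper bound --- is the same as the paper's, and your base case ($\gamma_1=\min(\gamma,\alpha/2)$, since the six angles of the two children are $\alpha/2,\beta,\alpha/2+\gamma$ and $\alpha/2,\gamma,\alpha/2+\beta$) and your upper-bound argument are both correct. The gap is in the induction step, and it is exactly where you suspect trouble but not for the reason you give. The induction hypothesis ``every angle of every triangle in $T_n$ is $\ge m$'' is genuinely too weak to close: the dangerous new angle is not the foot angle but the half-angle $A/2$, where $A$ is the largest angle of the triangle being bisected, and ``all angles $\ge m$'' only yields $A\ge \pi/3$, hence $A/2\ge \pi/6$, while $m$ can be as large as $\pi/4$. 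No ``separate trigonometric lemma'' valid for all triangles with angles $\ge m$ can rescue this: the triangle with angles $70^{\circ},65^{\circ},45^{\circ}$ has all angles $\ge 45^{\circ}$, yet bisecting its largest angle creates a $35^{\circ}$ angle. (The foot angle, by contrast, is never a threat --- it equals $A/2$ plus an adjacent base angle, so it trivially dominates an angle you already control.)

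The missing idea, which is the heart of the paper's proof, is to strengthen the invariant by tracking a second quantity: $\alpha_n$, the minimum over all $n$-th generation triangles of their \emph{largest} angle. One then proves the joint monotonicity $\min(\gamma_{n+1},\alpha_{n+1}/2)\ge\min(\gamma_n,\alpha_n/2)$. The first half, $\gamma_{n+1}\ge\min(\gamma_n,\alpha_n/2)$, is your local computation. The second half, $\alpha_{n+1}\ge\min(\alpha_n,2\gamma_n)$, is where the foot angle actually enters --- not as a potentially small angle, but as the mechanism preventing the largest angles from decaying: the largest angle of a child containing the foot is at least that foot angle, which equals $A/2$ plus a base angle, hence is at least $\alpha_n/2+\gamma_n\ge\min(\alpha_n,2\gamma_n)$. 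Telescoping the combined invariant down to $n=0$ gives $\gamma_n\ge\min(\gamma,\alpha/2)$. Without carrying $\alpha_n$ alongside $\gamma_n$, your induction cannot be completed.
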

\begin{proof}

Each of the $2^n$ triangles obtained after the $n^{th}$ iteration has a largest angle. Let $\alpha_n$ denote the smallest such angle.
It is easy to see that
\begin{equation}\label{step1}
\gamma_{n+1}\ge \min(\gamma_n, \alpha_n/2).
\end{equation}
Indeed, if $\gamma_{n+1}$ is obtained by bisecting the largest angle of some $n$-th generation triangle then $\gamma_{n+1}\ge \alpha_n/2$. Otherwise, $\gamma_{n+1}$ appears
a base angle of some $n^{th}$ generation triangle, hence, $\gamma_{n+1}\ge \gamma_n$.

Next we prove that
\begin{equation}\label{step2}
\alpha_{n+1}/2\ge \min(\gamma_n, \alpha_n/2).
\end{equation}
Let $MNP$ be the $n^{th}$ generation triangle one of whose offspring contains $\alpha_{n+1}$.
Without loss of generality we can assume that $\alpha_{n+1}$ is one of the angles of triangle $MQP$ - see figure \ref{MNP}.
\begin{figure}[!htb]
	\centering
	\includegraphics[scale=0.75]{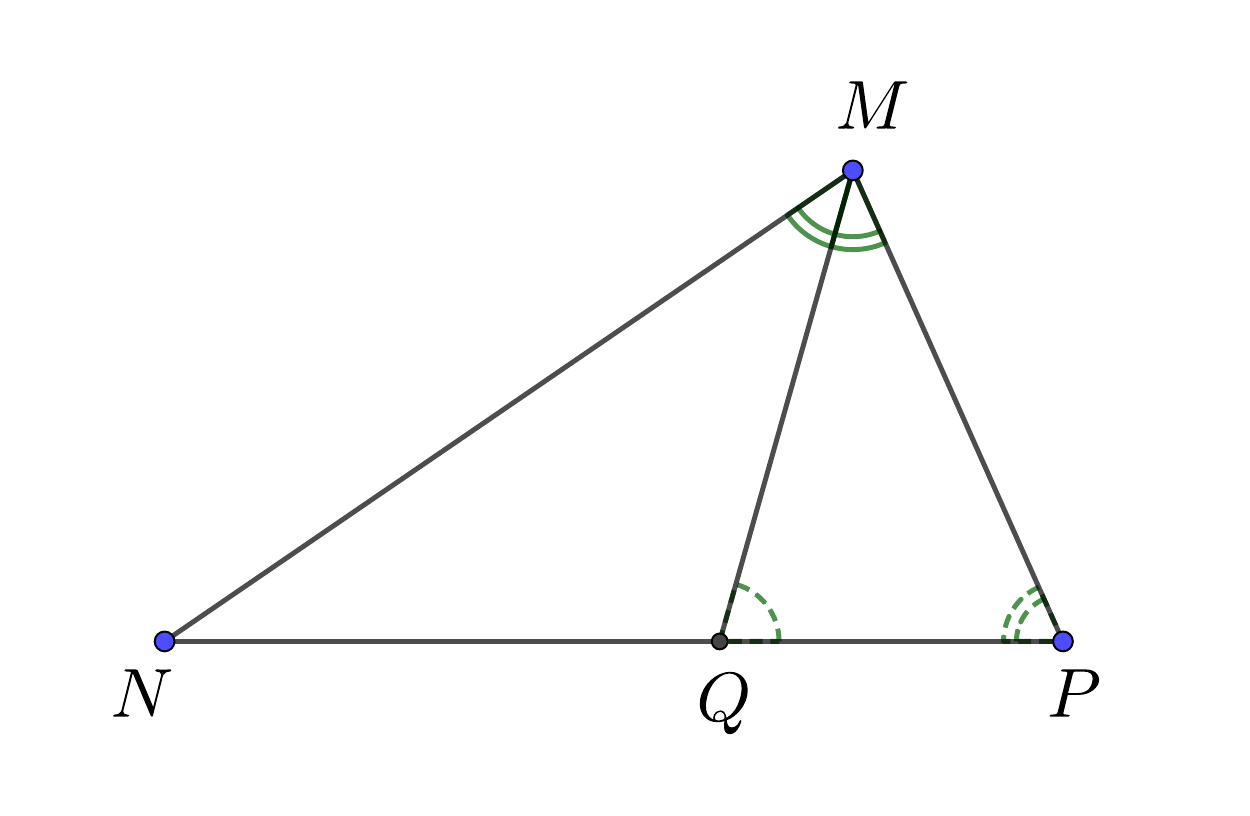}
	\vspace{-0.5cm}
	\caption{$\alpha_{n+1}=\max(\angle MQP, \angle MPQ)$}
	\label{MNP}
\end{figure}

Clearly, $\angle QMP <\angle MQP$ which implies that $\alpha_{n+1}= \max(\angle MQP,\angle MPQ) \ge \angle MQP$.

It follows that
\begin{equation*}
\alpha_{n+1}\ge \angle MQP = \angle NMQ+\angle MNQ =\angle NMP/2+\angle MNQ \ge \alpha_n/2+\gamma_n\ge \min(\alpha_n, 2\gamma_n).
\end{equation*}

Combining \eqref{step1} and \eqref{step2} we obtain that
$\min(\gamma_{n+1},\alpha_{n+1}/2)\ge \min(\gamma_n, \alpha_n/2)$, from which we obtain that
\begin{equation}\label{ineq1}
\gamma_n\ge \min(\gamma_{n-1}. \alpha_{n-1}/2)\ge \min(\gamma_{n-2}. \alpha_{n-2}/2)\ge \ldots\ge \min(\gamma_0, \alpha_0/2)=\min(\gamma, \alpha/2).
\end{equation}
On the other hand, it is easy to see that for all $n\ge 1$
\begin{equation}\label{ineq2}
\gamma_n\le \min(\gamma, \alpha/2).
\end{equation}
Indeed, if $\min(\gamma, \alpha/2)=\gamma$ then $\gamma$ appears in some $n^{th}$ generation triangle for  all $n\ge 0$ since one never bisects angles which are less than $60^{\circ}$.
In this case, it follows that $\gamma_n\le \gamma= \min(\gamma, \alpha/2)$.

Otherwise, $\min(\gamma, \alpha/2)=\alpha/2$ then $\alpha/2$ appears in some $n^{th}$ generation triangle for  all $n\ge 1$ for exactly the same reason as above.
Again, we obtain that $\gamma_n\le \alpha/2 = \min(\gamma, \alpha/2)$.
This proves inequality \eqref{ineq2}.
From \eqref{ineq1} and \eqref{ineq2} the statement of Theorem \ref{minanglethm} follows.

\end{proof}
\end{section}

\begin{section}{\bf Showing that $m_n\rightarrow 0$: Initial considerations}

Recall that in the longest edge bisection procedure it is relatively easy to prove that $m_2\le m_0\cdot \sqrt{3}/2$ and
in general that $m_{n+2}\le m_n\cdot \sqrt{3}/2$. This eventually implies that $m_n\le m_0\cdot (\sqrt{3}/2)^\lfloor{n/2}\rfloor$.
Thus $m_n\rightarrow 0$ exponentially and the base is an absolute constant - see figure \ref{altvsbis} $(a)$.
\begin{figure}[!htb]
\centering
\includegraphics[scale=0.95]{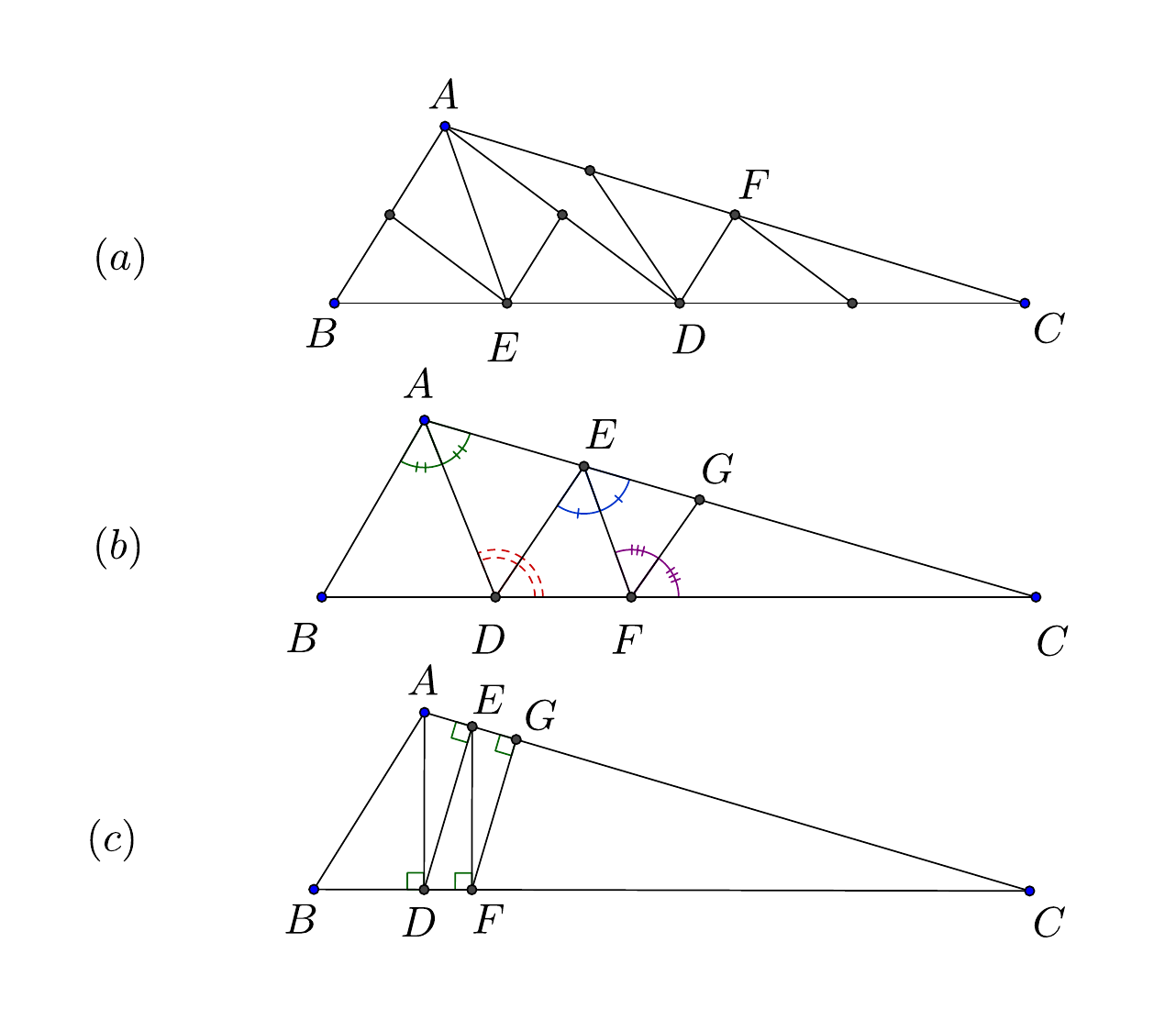}
\vspace{-0.5cm}
\caption{Mesh decay in three situations: $(a)$ the longest edge bisection; $(b)$ the largest angle bisection; $(c)$ the shortest altitude bisection}
\label{altvsbis}
\end{figure}

Note that such a result is not possible in the largest angle bisector procedure scenario.
Indeed, let $ABC$ be a very thin isosceles triangle; then the decay of $m_n$ could be quite slow
depending on the choice of $ABC$ - see figure \ref{altvsbis} $(b)$.

On the other hand, define the {\it shortest altitude bisection} procedure to be analogous to the longest
edge bisection and the largest angle bisection operation, the only difference being that at each step we
draw the altitude corresponding to the largest edge of the triangle (rather than the the median or the
angle bisector) - see figure \ref{altvsbis} $(c)$.

It it easy to see that in this case we have at most two similarity classes. Moreover, $m_n\rightarrow 0$ exponentially. Indeed,
let $ABC$ be a triangle and let $AD$ be the altitude corresponding to its longest edge. Focus of triangle $ACD$ first: denote $AD=x$, $CD=y$ and $AC=z$. Then construct $DE\bot AC$. Each of the two new triangles $ADE$ and $CDE$ is similar to $ACD$ and the corresponding similarity ratios are $x/z$ and $y/z$, respectively.

It follows that if one continues applying the shortest altitude bisection procedure to the \\subtriangles of $ACD$, the largest segment among all $n$th generation triangles cannot exceed $z\cdot \max(x/z,y/z)^n$. A similar reasoning applies to triangle $ABD$. This shows that $m_n$ approaches $0$ exponentially, but the base of this exponential depends on the initial triangle $ABC$.

It is therefore reasonable to expect that in the largest angle bisection situation, the mesh $m_n$ is going to behave in a
similar fashion. One only needs to select an appropriate quantity $k(ABC)<1$ that depends on $ABC$ and which will eventually
allow us to prove that $m_n\le m_0\cdot k^n$.

Denote $BC=a$, $AC=b$ and $AB=c$, the lengths of the sides of triangle $ABC$, Further assume that $a\ge b\ge c$.
One natural choice for $k$ would be the ratio $a/(b+c)$; we are going to call this the {\it aspect ratio} of triangle $ABC$ and
we will denote it by $r(ABC)$.  By triangle inequality, $r(ABC)<1$ so everything is fine.

The problem however is that the aspect ratio of one of the triangles obtained by largest angle bisecting $ABC$
could be greater than the aspect ratio of $ABC$. This is going to create difficulties when attempting to use induction.
So, we may have to adjust our selection of $k(ABC)$ as follows $k(ABC)=\max(r(ABC),r(ABD),r(ACD))$.

But even this is not sufficient as it may happen that there is a triangle in the second generation whose aspect ratio exceeds
the aspect ratios of all its ancestors. This is the case when $ABC$ is equilateral: $r(ABC)=1/2$,
$r(ABD)=r(ACD)=\sqrt{3}-1=0.732\ldots$ while the $(30^{\circ},45^{\circ},105^{\circ})$ triangles obtained
after the second iteration have aspect ratio $\sin{52.5^\circ}\cdot \sec{7.5^{\circ}} =0.8001\ldots$.

Fortunately, this is as far as we will have to go. At the heart of the entire proof of $m_n\rightarrow 0$ lies the following idea

{\emph {Given a triangle $ABC$, let $ABD$ and $ACD$ be the children of $ABC$ obtained via the largest angle bisection
procedure. Consider the quantity
\begin{equation*}
\rho_0= \max(r(ABC), r(ABD), r(ACD), \sqrt{3}/2).
\end{equation*}
Then all triangles obtained in the subsequent iterations have aspect ratio no greater than $\rho_0$.}}

The next two sections contain the technical details.
\end{section}

\begin{section}{\bf One simple lemma}
\begin{df}
Given a triangle $ABC$ with sides $a\ge b \ge c$, and angles $\alpha\ge \beta \ge \gamma$, define the {\it aspect ratio of $ABC$} as
\begin{equation}\label{ratiosides}
r(ABC):=\frac{a}{b+c}
\end{equation}
\end{df}

Hence, the aspect ratio of a triangle is obtained by dividing the length of the longest side by the sum of the lengths \
of the other two sides.  Obviously, an easy consequence of triangle inequality is that for any triangle we have that $r < 1$.
On the other hand, $r\ge 1/2$, with equality if and only if the triangle is equilateral.

Note that $r(ABC)$ can be expressed in terms of the angles of the triangle.

{
\begin{equation}\label{ratiotrig}
r(ABC)=\frac{a}{b+c}=\frac{\sin{\alpha}}{\sin{\beta}+\sin{\gamma}}=
\frac{2\sin{\frac{\alpha}{2}\cos{\frac{\alpha}{2}}}}{2\sin{\frac{\beta+\gamma}{2}}\cos{\frac{\beta-\gamma}{2}}}
=\sin{\frac{\alpha}{2}}\cdot\sec{\frac{\beta-\gamma}{2}}.
\end{equation}
}
\bigskip

Thus, the aspect ratio of a triangle is the product between the sine function applied to half the largest
angle and the secant function applied to half the difference of the other two angles.
Due to the nature of the problem, we are going to use \eqref{ratiotrig} much more often than \eqref{ratiosides}.

\begin{lemma}\label{lemma1}
Let $ABC$ be a triangle with sides $BC=a$, $AC=b$, $AB=c$ with $a\ge b\ge c$.
Denote the corresponding angles by $\alpha$, $\beta$ and $\gamma$, respectively.
Obviously, $\alpha \ge \beta \ge \gamma$. Let $AD$ be the angle bisector of angle $\angle BAC$.
Then the following inequalities hold true:
\begin{align}
& \frac{AD}{BC}\le \frac{\sqrt{3}}{2}.\label{ADoverBC}\\
& r(ABD)\le r(ACD).\label{rr}
\end{align}
\end{lemma}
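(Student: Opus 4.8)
The plan is to handle both inequalities with the trigonometric machinery already in place: the internal angle-bisector length formula $AD=\frac{2bc\cos(\alpha/2)}{b+c}$, the law of sines, and the identity \eqref{ratiotrig} expressing the aspect ratio as $\sin(\text{half the largest angle})\cdot\sec(\text{half the difference of the other two})$.

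For \eqref{ADoverBC}, I would first rewrite $AD/BC=AD/a$ purely in terms of the angles. Using the bisector formula, the law of sines, and $\sin\alpha=2\sin\frac\alpha2\cos\frac\alpha2$, this collapses to
\[
\frac{AD}{BC}=\frac{\sin\beta\,\sin\gamma}{\sin\frac\alpha2\,(\sin\beta+\sin\gamma)}.
\]
Writing $\beta=\frac{\pi-\alpha}{2}+d$ and $\gamma=\frac{\pi-\alpha}{2}-d$ with $d=\frac{\beta-\gamma}{2}\ge 0$, the numerator becomes $\cos^2\frac\alpha2-\sin^2 d$ and the denominator $2\sin\frac\alpha2\cos\frac\alpha2\cos d$, so that
\[
\frac{AD}{BC}=\frac{\cos^2 d-\sin^2\frac\alpha2}{\sin\alpha\,\cos d}=\frac{1}{\sin\alpha}\Bigl(\cos d-\sin^2\tfrac\alpha2\,\sec d\Bigr).
\]
Since the bracket is decreasing in $d$ on $[0,\pi/2)$, the ratio is largest in the isosceles case $d=0$, where it equals $\frac{\cos^2(\alpha/2)}{\sin\alpha}=\frac12\cot\frac\alpha2$. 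Because $\alpha$ is the largest angle, $\alpha\ge 60^\circ$, whence $\frac12\cot\frac\alpha2\le\frac12\cot 30^\circ=\frac{\sqrt3}{2}$, which is \eqref{ADoverBC} (with equality exactly for the equilateral triangle).

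For \eqref{rr}, the first task is to locate the largest angle in each child. The cevian $AD$ makes $\angle ADB=\pi-\frac\alpha2-\beta$ and $\angle ADC=\pi-\frac\alpha2-\gamma$ supplementary, and since $\beta\ge\gamma$ we get $\angle ADC\ge\frac\pi2\ge\angle ADB$. Thus $\angle ADC$ is the largest angle of $ACD$, and \eqref{ratiotrig} gives the clean value $r(ACD)=\cos\bigl(\frac\alpha4+\frac\gamma2\bigr)\sec\bigl(\frac\alpha4-\frac\gamma2\bigr)$. For $ABD$ one checks that $\angle ADB\ge\frac\alpha2$ always, so its largest angle is either $\angle ADB$ or $\beta$, and I would split into two cases. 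If $\angle ADB\ge\beta$ (equivalently $\beta\le\frac\pi2-\frac\alpha4$), then \eqref{ratiotrig} gives $r(ABD)=\cos\bigl(\frac\alpha4+\frac\beta2\bigr)\sec\bigl(\frac\alpha4-\frac\beta2\bigr)$, the same expression as $r(ACD)$ with $\beta$ in place of $\gamma$; cross-multiplying and applying product-to-sum reduces $r(ABD)\le r(ACD)$ to $\cos\frac{\alpha+\beta-\gamma}{2}\le\cos\frac{\alpha-\beta+\gamma}{2}$, which holds since $\beta\ge\gamma$ and both arguments lie in $[0,\pi)$. In the remaining case $\beta$ is the largest angle of $ABD$; here the difference of the other two angles turns out to be exactly $\gamma$, so $r(ABD)=\sin\frac\beta2\,\sec\frac\gamma2$, and the same cross-multiply/product-to-sum routine reduces the claim to $\cos\frac{3\alpha}{4}\le\cos\frac\alpha4$, true because $0<\frac\alpha4\le\frac{3\alpha}{4}<\pi$.

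I expect the main obstacle to be the bookkeeping in \eqref{rr}: unlike $ACD$, whose largest angle is forced to be the obtuse (or right) angle at $D$, the child $ABD$ can carry its largest angle at either $D$ or $B$, so the aspect-ratio formula changes form and the case $\beta>\frac\pi2-\frac\alpha4$ must be argued separately. The payoff is that, after the right substitutions, both cases collapse to elementary monotonicity statements for the cosine, so no calculus is required beyond the single monotonicity observation used in \eqref{ADoverBC}.
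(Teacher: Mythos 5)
Your argument for \eqref{rr} is correct and is essentially the paper's: the same identification of $\angle ADC$ as the forced largest angle of $ACD$, the same case split according to whether the largest angle of $ABD$ sits at $D$ or at $B$, and the same cross-multiply/product-to-sum reductions. Your terminal inequalities $\cos\frac{\alpha+\beta-\gamma}{2}\le\cos\frac{\alpha-\beta+\gamma}{2}$ and $\cos\frac{3\alpha}{4}\le\cos\frac{\alpha}{4}$ are exactly the paper's $\sin\beta\ge\sin\gamma$ and $2\sin\frac{\alpha}{4}\cos\frac{\beta+\gamma}{2}\ge 0$ rewritten via $\beta+\gamma=180^{\circ}-\alpha$, so this part is only cosmetically different. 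For \eqref{ADoverBC}, however, you take a genuinely different and also correct route. The paper stays with side lengths: it uses $AD^{2}=bc\bigl(1-a^{2}/(b+c)^{2}\bigr)$ together with the crude bounds $b+c\le 2a$ and $bc\le(b+c)^{2}/4$, a three-line computation. You instead express $AD/BC$ purely in the angles, isolate the half-difference $d=\frac{\beta-\gamma}{2}$, note that $\cos d-\sin^{2}\frac{\alpha}{2}\sec d$ is decreasing in $d$, and finish with $\alpha\ge 60^{\circ}$. Your version is longer but buys more: it produces the sharper intermediate bound $AD/BC\le\frac{1}{2}\cot\frac{\alpha}{2}$, identifies the extremal configuration, and shows that equality in \eqref{ADoverBC} holds exactly for the equilateral triangle, none of which the paper's quick estimate delivers. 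Both proofs are complete; I see no gaps in yours.
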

\vspace{-0.6cm}

\begin{proof}
It is easy to express the length of the angle bisector $AD$ in terms of the side lengths $a$, $b$ and $c$. We have:
\begin{equation*}
\frac{AD^2}{BC^2}=\frac{bc}{(b+c)^2}\cdot\frac{(b+c)^2-a^2}{a^2}\le\frac{bc}{(b+c)^2}\cdot\frac{(a+a)^2-a^2}{a^2}\le \frac{1}{4}\cdot 3 =\frac{3}{4}.
\end{equation*}

This proves the first part. Since $\angle ADC$ is the largest angle of triangle $ACD$ it follows that
\begin{equation*}
r(ACD)=\sin\frac{\alpha+2\beta}{4}\cdot \sec\frac{\alpha-2\gamma}{4}.
\end{equation*}
For triangle $BCD$ we have
\[
 r(BCD) =
  \begin{cases}
   \sin\frac{\alpha+2\gamma}{4}\cdot \sec\frac{\alpha-2\beta}{4} & \text{if } \alpha/2+\gamma \geq \beta \\
   \sin\frac{\beta}{2}\cdot \sec\frac{\gamma}{2}       & \text{if } \alpha/2+\gamma \leq \beta
  \end{cases}
\]

In the first case, inequality \eqref{rr} is equivalent to
\begin{align*}
r(ACD)\ge r(ABD) &\longleftrightarrow \sin\frac{\alpha+2\beta}{4}\cdot \sec\frac{\alpha-2\gamma}{4}\ge \sin\frac{\alpha+2\gamma}{4}\cdot \sec\frac{\alpha-2\beta}{4}\longleftrightarrow\\
&\longleftrightarrow \sin\frac{\alpha+2\beta}{4}\cos\frac{\alpha-2\beta}{4}\ge  \sin\frac{\alpha+2\gamma}{4}\cos\frac{\alpha-2\gamma}{4}\longleftrightarrow\\
&\longleftrightarrow \sin\frac{\alpha}{2}+\sin{\beta}\ge \sin\frac{\alpha}{2}+\sin{\gamma}\longleftrightarrow \sin{\beta}\ge \sin{\gamma} \longleftrightarrow \beta\ge \gamma,
\end{align*}
the last step being true since $\gamma\le \beta\le 90^{\circ}$.

In the second case, inequality \eqref{rr} can be written equivalently as
\begin{align*}
&r(ACD)\ge r(ABD) \longleftrightarrow \sin\frac{\alpha+2\beta}{4}\cdot \sec\frac{\alpha-2\gamma}{4}\ge \sin\frac{\beta}{2}\cdot \sec\frac{\gamma}{2}\longleftrightarrow\\
&\longleftrightarrow \sin\frac{\alpha+2\beta}{4}\cos\frac{2\gamma}{4}\ge  \sin\frac{2\beta}{4}\cos\frac{\alpha-2\gamma}{4}\longleftrightarrow\\
&\longleftrightarrow \sin\frac{\alpha+2\beta+2\gamma}{4}+\sin\frac{\alpha+2\beta-2\gamma}{4}\!\!\ge \sin\frac{\alpha+2\beta-2\gamma}{4}+\sin\frac{-\alpha+2\beta+2\gamma}{4}\longleftrightarrow \\
&\longleftrightarrow \sin\frac{\alpha+2\beta+2\gamma}{4}-\sin\frac{-\alpha+2\beta+2\gamma}{4}\ge 0\longleftrightarrow 2\sin\frac{\alpha}{4}\cdot\cos\frac{\beta+\gamma}{2}\ge 0.
\end{align*}

which is obviously true since $\beta+\gamma<180^{\circ}$. This completes the proof.
\end{proof}
\begin{obs}\label{usefulobservation}
The results proved in Lemma \ref{lemma1} are going to be used frequently throughout the rest of the paper so it is useful to restate them as follows. Inequality \eqref{ADoverBC} says that the angle bisector of the largest angle of a triangle cannot exceed $\sqrt{3}/2$ of the length of the largest side of the triangle. Inequality \eqref{rr} states that of the two triangles created after applying the largest angle bisection procedure, the one containing the smallest angle has the larger aspect ratio.
\end{obs}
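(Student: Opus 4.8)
The plan is to treat the two inequalities separately, since they are of quite different character: \eqref{ADoverBC} is a metric statement about lengths, while \eqref{rr} is really a statement about angles.

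For \eqref{ADoverBC} I would start from the standard angle-bisector length formula (Stewart's theorem). Writing $AD$ for the bisector from $A$ to the side $a=BC$, one has
\begin{equation*}
AD^2 = bc\left(1 - \frac{a^2}{(b+c)^2}\right) = \frac{bc\left[(b+c)^2 - a^2\right]}{(b+c)^2},
\end{equation*}
so that $\dfrac{AD^2}{BC^2} = \dfrac{bc}{(b+c)^2}\cdot\dfrac{(b+c)^2 - a^2}{a^2}$. I would then bound each factor. The first is at most $1/4$ by AM--GM ($4bc \le (b+c)^2$), and the second is at most $3$ precisely because $a$ is the longest side: $b+c \le 2a$ forces $(b+c)^2 - a^2 \le 3a^2$, while the triangle inequality keeps the quantity positive. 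Multiplying gives $AD^2/BC^2 \le 3/4$, and taking square roots yields \eqref{ADoverBC}. The only place the hypothesis $a \ge b \ge c$ enters is the bound $b+c \le 2a$, so this part is routine.

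For \eqref{rr} the idea is to express both aspect ratios through the trigonometric identity \eqref{ratiotrig} and compare. First I would pin down the angles of the two children. Since $D$ lies on $BC$ and $AD$ bisects $\alpha$, the triangle $ACD$ has angles $\alpha/2$, $\gamma$, and $\angle ADC$; using $\alpha+\beta+\gamma=\pi$ one finds $\angle ADC = \alpha/2 + \beta$, and similarly $\angle ADB = \alpha/2 + \gamma$ in $ABD$. Because $\angle ADB + \angle ADC = \pi$ and $\beta \ge \gamma$, the angle $\angle ADC$ is at least $\pi/2$ and is therefore the largest angle of $ACD$; this yields $r(ACD) = \sin\frac{\alpha+2\beta}{4}\sec\frac{\alpha-2\gamma}{4}$ directly from \eqref{ratiotrig}. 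For $ABD$, however, the largest angle is $\max(\beta,\,\alpha/2+\gamma)$, so the computation of $r(ABD)$ must split into two cases according to the sign of $\alpha/2+\gamma-\beta$.

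In each case the claim $r(ACD) \ge r(ABD)$ becomes an inequality between products of sines and cosines, which I would linearize with the product-to-sum formulas. I expect this case analysis and its trigonometric bookkeeping to be the main obstacle: one must rewrite the $\sin\frac{\cdot}{4}\sec\frac{\cdot}{4}$ expressions in $\sin\frac{\cdot}{4}\cos\frac{\cdot}{4}$ form, expand, and cancel, all while tracking which angles are acute so that $\sec$ and $\cos$ keep the right sign. After the dust settles I expect the first case to collapse to $\sin\beta \ge \sin\gamma$ (true since $\gamma \le \beta \le \pi/2$) and the second to a manifest nonnegativity of the form $2\sin\frac{\alpha}{4}\cos\frac{\beta+\gamma}{2} \ge 0$ (true since $\beta+\gamma < \pi$), which completes the proof.
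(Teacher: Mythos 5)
Your proposal is correct and follows essentially the same route as the paper: the angle-bisector length formula with the bounds $4bc\le(b+c)^2$ and $b+c\le 2a$ for \eqref{ADoverBC}, and the trigonometric form \eqref{ratiotrig} with the case split on the sign of $\alpha/2+\gamma-\beta$ for \eqref{rr}, reducing in the two cases to $\sin\beta\ge\sin\gamma$ and to $2\sin\frac{\alpha}{4}\cos\frac{\beta+\gamma}{2}\ge 0$ exactly as in the paper. Your additional remark that $\angle ADC\ge\pi/2$ (hence is the largest angle of $ACD$) is a clean justification of a step the paper states without comment.
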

\end{section}

\begin{section}{\bf The aspect ratio lemma}
We next introduce an important quantity. For every $n\ge 0$ let
\begin{equation}\label{rdef}
r_n:=\max_{1\le i\le 2^n}(r(\Delta_{ni})),
\end{equation}
that is, $r_n$ is the maximum aspect ratio over all triangles obtained after the $n$th iteration of the
largest angle bisection procedure.

With this notation we have under the premises of Lemma \ref{lemma1} that $r_0=r(ABC)$, and by \eqref{rr} $r_1=r(ACD)$.
Since these two quantities are going to be very frequently used in the sequel we list them below for easy future reference.
\begin{equation}\label{r0r1}
r_0=r(ABC)=\sin{\frac{\alpha}{2}}\cdot\sec{\frac{\beta-\gamma}{2}}\quad \text{and}\quad
r_1=r(ACD)=\sin\frac{\alpha+2\beta}{4}\cdot \sec\frac{\alpha-2\gamma}{4}.
\end{equation}

\begin{lemma}\label{mainlemma}
Given a triangle $\Delta_{01}=ABC$ with angles $\alpha\ge \beta \ge \gamma$, let $AD$ be the angle bisector of angle $\alpha$.
Construct the angle bisectors of the largest angles in each of the triangles $\Delta_{11}=ABD$ and $\Delta_{12}=ACD$.
Let $r_2=\max(r(\Delta_{21}),\,r(\Delta_{22}),\,r(\Delta_{23}),\,r(\Delta_{24}))$ be the largest of the aspect ratios of
the four smaller triangles created after the second iteration of the largest angle bisection procedure. Then
\begin{equation}\label{r2}
r_2\le \max\left(r_0,r_1,\sqrt{3}/2\right).
\end{equation}
\end{lemma}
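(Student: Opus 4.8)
The plan is to cut the number of grandchildren that actually need estimating from four down to two, pin down their largest angles explicitly, and then reduce the whole inequality to a short list of trigonometric inequalities in $\alpha,\beta,\gamma$.

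First I would record the angles of the two first-generation triangles. In $\Delta_{12}=ACD$ the angles are $\alpha/2$ at $A$, $\gamma$ at $C$, and $\angle ADC=180^\circ-\alpha/2-\gamma$ at $D$; since $\beta\ge\gamma$ forces $\alpha\le 180^\circ-2\gamma$, one gets $\angle ADC\ge 90^\circ$, so $ACD$ is (weakly) obtuse with its largest angle at $D$. In $\Delta_{11}=ABD$ the angles are $\alpha/2$, $\beta$, and $\angle ADB=\alpha/2+\gamma=180^\circ-\angle ADC\le 90^\circ$. The second iteration therefore bisects $\angle ADC$ in $ACD$ and bisects whichever of $\beta$ and $\angle ADB$ is larger in $ABD$. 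These computations are exactly the inputs already assembled for formula \eqref{r0r1}.

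The decisive reduction is inequality \eqref{rr} of Lemma \ref{lemma1}: of the two children of any triangle, the one retaining the parent's smallest angle has the larger aspect ratio. Applying this to each of $ABD$ and $ACD$ shows that $r_2$ is simply the larger of the two aspect ratios of the \emph{dominant} grandchildren---the child of $ABD$ retaining the smallest angle of $ABD$, and the child of $ACD$ retaining $\min(\alpha/2,\gamma)$. Hence it suffices to bound just these two triangles by $\max(r_0,r_1,\sqrt3/2)$. For each I would read off its largest angle from the angle lists above and then write its aspect ratio through the trigonometric form \eqref{ratiotrig}, namely $\sin(\text{half the largest angle})\cdot\sec(\text{half the difference of the other two})$; the target inequality then becomes an explicit trigonometric statement that should yield to the same product-to-sum manipulations used in the proof of Lemma \ref{lemma1}.

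I expect the real obstacle to be the $ACD$ branch in the regime $\alpha\le 2\gamma$, where $\alpha/2\le\gamma$ and the dominant child is $ADE$, whose angles are $\alpha/2$, $\tfrac12\angle ADC$, and largest angle $\angle AED=90^\circ-\alpha/4+\gamma/2$. This is precisely the configuration produced by the equilateral triangle, whose relevant grandchild is the $(30^\circ,45^\circ,105^\circ)$ triangle with aspect ratio $\sin 52.5^\circ\,\sec 7.5^\circ=0.8001\ldots$; there neither $r_0$ nor $r_1$ controls it, and the bound must come from the constant $\sqrt3/2$. So the crux is to prove $\sin\!\big(45^\circ-\alpha/8+\gamma/4\big)\cdot\sec(\cdots)\le\max(r_1,\sqrt3/2)$ throughout the region $\alpha\ge\beta\ge\gamma\ge\alpha/2$---the step where the slack deliberately built into $\sqrt3/2$, and the failure of the naive ``aspect ratio decreases'' induction flagged in the preceding section, is actually used. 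The remaining cases (the whole $ABD$ branch, and the $ACD$ branch when $\alpha\ge 2\gamma$ so that the dominant child is $CDE$) I expect to reduce to direct comparisons against $r_0$ and $r_1$ with no new ideas required.
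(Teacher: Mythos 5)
Your reduction is sound and is exactly the paper's opening move: by inequality \eqref{rr}, only the two grandchildren retaining the smallest angles of $ABD$ and $ACD$ can realize $r_2$, and the case split according to which angle of $ABD$ is largest (whether $\beta\ge\alpha/2+\gamma$ or not) together with how $\alpha$ compares to $2\beta$ and $2\gamma$ is the same one the paper carries out. You have also correctly located where the constant $\sqrt3/2$ must enter, namely the regime $\gamma\ge\alpha/2$. But the proposal stops exactly at the one step that is not a routine product-to-sum computation: the bound by $\sqrt3/2$ in that regime is announced as ``the crux to prove'' and no argument is offered. In the paper this is done by the change of variables $\gamma=x+y+z$, $\beta=2x+y+z$, $\alpha=2x+2y+z$ with $x,y,z\ge0$ and $5x+4y+3z=180^{\circ}$, which gives $r(ADF)=\sin\frac{12x+8y+7z}{8}\cdot\sec\frac{z}{8}\le\sin 54^{\circ}\cdot\sec 7.5^{\circ}=0.8159\ldots<\sqrt3/2$; some argument of this kind is genuinely required, since in this regime the quantity is not dominated by $r_0$ or $r_1$ and the inequality does not reduce to a sign check as the others do.

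Second, your bookkeeping of which grandchild needs the $\sqrt3/2$ bound is off. You assert that the whole $ABD$ branch reduces to direct comparisons against $r_0$ and $r_1$; that fails precisely when $\alpha\le 2\gamma$. For the equilateral triangle the dominant child of $ABD$ is $ADF$, itself a $(30^{\circ},45^{\circ},105^{\circ})$ triangle with aspect ratio $0.8001\ldots$, which exceeds $r_1=\sqrt3-1=0.732\ldots$, so it cannot be controlled by $r_0$ or $r_1$ either. The paper in fact runs the comparison in the opposite direction from the one you sketch: in the subcase $\alpha\le2\gamma$ it proves $r(ADE)\le r(ADF)$ and then applies the explicit $\sqrt3/2$ estimate to $ADF$, the $ABD$-side grandchild, rather than to $ADE$. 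So both dominant grandchildren in that subcase require the constant, and your plan is missing both the comparison $r(ADE)\le r(ADF)$ (or separate estimates for each) and the quantitative estimate itself.
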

\begin{proof}
It is easy to see that $\angle ADC$ is the largest angle of triangle $ACD$ - see figure \ref{lemma1fig}.
Hence, in the second step one has to construct $DE$, the angle bisector of $\angle ADC$. In triangle $ABD$ however, it may be that either $\angle ABD$ or $\angle ADB$ is the largest angle.
We will therefore study two cases, depending on whether $\beta\ge \alpha/2+\gamma$ or $\beta\le \alpha/2+\gamma$.

\smallskip

\framebox[1.1\width]{{\bf Case 1. $\beta\ge \alpha/2+\gamma$}} \par

Please refer to figure \ref{case1}. We have that $\beta\ge \alpha/2+\gamma \ge \beta/2+\gamma$ from
which $\beta\ge 2\gamma$. On the other hand, $\beta\ge \alpha/2+\gamma$ which implies $2\beta\ge \alpha$.
Hence in this case
\begin{equation}\label{case1ineq}
2\beta\ge \alpha\ge \beta\ge 2\gamma.
\end{equation}
{\linespread{1.2}{
Since  $\angle BAF = \alpha/2$ and $\angle BDF = \alpha/2+\gamma$ it follows from \eqref{rr} that $r(BAF)\ge r(BDF)$.
Similarly, since $\angle DAE=\alpha/2$ and $\angle DCE=\gamma$ we have that $\angle DAE \ge \angle DCE$ and by using \eqref{rr} again, $r(CDE)\ge r(ADE)$.

This eliminates from further considerations two of the four triangles obtained in the second iteration.
In order to complete the proof of this case it would suffice to show that $r(ABF)\le r_1$ and $r(CDE)\le r_1$.
Recall that $r_1=r(ACD)$.

\begin{figure}[!htb]
\centering
\includegraphics[scale=1]{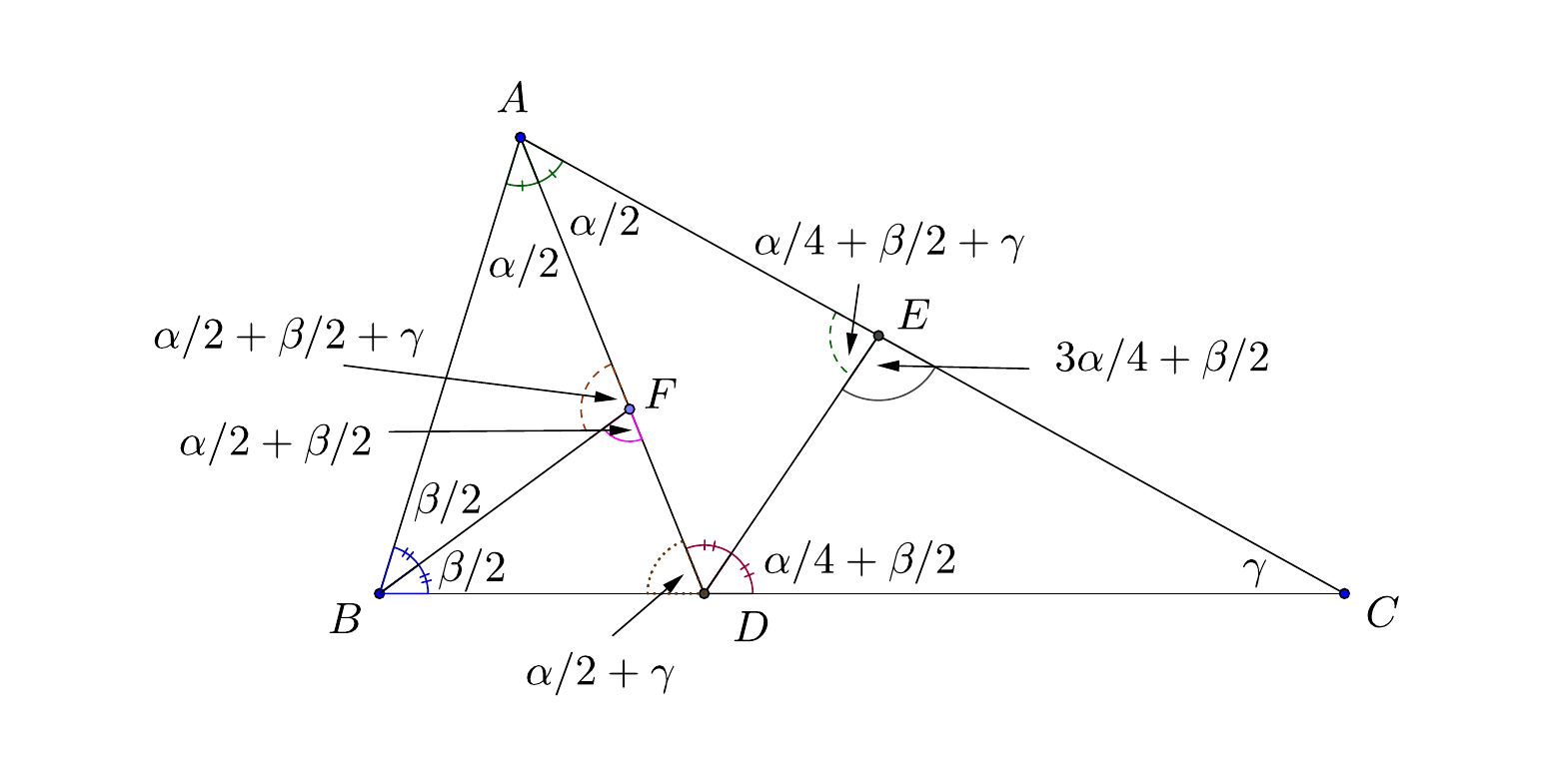}
\vspace{-0.2cm}
\caption{$\beta\ge \alpha/2+\gamma$}
\label{case1}
\end{figure}

To prove the first inequality notice the following equivalences
\begin{align*}
r(ABF)\le r(ACD)&\longleftrightarrow \sin\frac{\alpha+\beta+2\gamma}{4}\cdot \sec\frac{\alpha-\beta}{4}
\le\sin\frac{\alpha+2\beta}{4}\cdot \sec\frac{\alpha-2\gamma}{4}\longleftrightarrow\\
&\longleftrightarrow \sin\frac{\alpha+\beta+2\gamma}{4}\cdot\cos\frac{\alpha-2\gamma}{4}\le\sin\frac{\alpha+2\beta}{4}\cdot \cos\frac{\alpha-\beta}{4}\longleftrightarrow\\
&\longleftrightarrow\sin\frac{\alpha+\beta}{4}+\sin\frac{\beta+4\gamma}{4}\le \sin\frac{\alpha+\beta}{4}+\sin\frac{3\beta}{4}\longleftrightarrow\\
&\longleftrightarrow\sin\frac{3\beta}{4}-\sin\frac{\beta+4\gamma}{4}\ge 0 \longleftrightarrow \sin\frac{\beta-2\gamma}{4}\cdot \cos\frac{\beta+\gamma}{2}\ge 0,
\end{align*}
which is certainly true since $\beta\ge 2\gamma$ and $\beta+\gamma\le 180^{\circ}$.

A similar approach proves the second inequality.
\begin{align*}
&r(CDE)\le r(ACD)\longleftrightarrow \sin\frac{3\alpha+2\beta}{8}\cdot \sec\frac{\alpha+2\beta-4\gamma}{8}
\le\sin\frac{\alpha+2\beta}{4}\cdot \sec\frac{\alpha-2\gamma}{4}\longleftrightarrow\\
&\longleftrightarrow \sin\frac{3\alpha+2\beta}{8}\cdot\cos\frac{2\alpha-4\gamma}{8}\le\sin\frac{2\alpha+4\beta}{8}\cdot \cos\frac{\alpha+2\beta-4\gamma}{8}\longleftrightarrow\\
&\longleftrightarrow\sin\frac{5\alpha+2\beta-4\gamma}{8}+\sin\frac{\alpha+2\beta+4\gamma}{8}\le \sin\frac{3\alpha+6\beta-4\gamma}{8}+\sin\frac{\alpha+2\beta+4\gamma}{8}\longleftrightarrow\\
&\longleftrightarrow\sin\frac{3\alpha+6\beta-4\gamma}{8}-\sin\frac{5\alpha+2\beta-4\gamma}{8}\ge 0 \longleftrightarrow \sin\frac{2\beta-\alpha}{8}\cdot \cos\frac{\alpha+\beta-\gamma}{2}\ge 0,
\end{align*}
and that is true since $2\beta\ge \alpha$ and $\alpha+\beta-\gamma\le 180^{\circ}$.
This proves \eqref{r2} when $\beta\ge \alpha/2+\gamma$.
}}

\framebox[1.1\width]{{\bf Case 2. $\beta\le \alpha/2+\gamma$}} \par

Since $\beta\le \alpha/2+\gamma$, in order to divide triangle $ABD$,
we have to consider the angle bisector from $D$ - see figure \ref{case2}.This case is more difficult.
We need to further split the analysis into three subcases depending on whether $\alpha\ge 2\beta$,
$2\gamma\le \alpha \le 2\beta$ or $\alpha \le 2\gamma$.

\begin{figure}[!htb]
\centering
\includegraphics[scale=1]{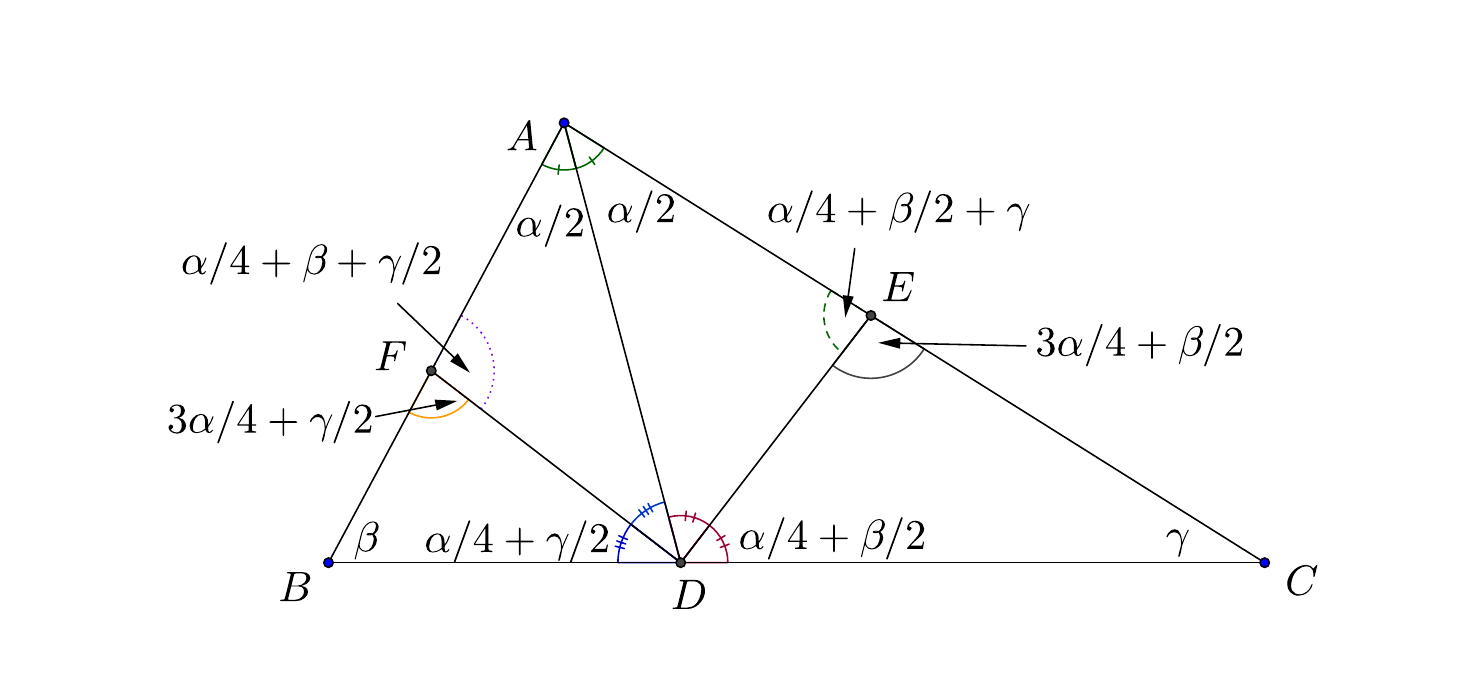}
\vspace{0cm}
\caption{$\beta\le \alpha/2+\gamma$}
\label{case2}
\end{figure}
\bigskip
\framebox[1\width]{{\bf Subcase 2.1. $\alpha\ge 2\beta$}} \par
\medskip

Using the fact that $\alpha/2\ge \beta \ge \gamma$ and \eqref{rr} we obtain that $r(BDF)\ge r(ADF)$ and
$r(CDE)\ge r(ADE)$. This removes triangles $ADE$ and $ADF$ from our analysis. To prove \eqref{r2} it would suffice
to show that $r(CDE)\le r_0$ and $r(BDF)\le r_0$. Recall that $r_0=r(ABC)=\sin\frac{\alpha}{2}\cdot\sec{\frac{\beta-\gamma}{2}}$.
 The first inequality is equivalent to
\vspace{-0.2cm}
\begin{align*}
&r(CED)\le r(ABC)\longleftrightarrow \sin\frac{3\alpha+2\beta}{8}\cdot\sec\frac{\alpha+2\beta-4\gamma}{8}\le \sin\frac{\alpha}{2}\cdot\sec\frac{\beta-\gamma}{2}\longleftrightarrow\\
&\longleftrightarrow \sin\frac{3\alpha+2\beta}{8}\cdot\cos\frac{4\beta-4\gamma}{8}\le \sin\frac{4\alpha}{8}\cdot\cos\frac{\alpha+2\beta-4\gamma}{8}\longleftrightarrow\\
&\longleftrightarrow \sin\frac{3\alpha+6\beta-4\gamma}{8}+\sin\frac{3\alpha-2\beta+4\gamma}{8}\le \sin\frac{5\alpha+2\beta-4\gamma}{8}+\sin\frac{3\alpha-2\beta+4\gamma}{8}\longleftrightarrow\\
&\longleftrightarrow \sin\frac{5\alpha+2\beta-4\gamma}{8}-\sin\frac{3\alpha+6\beta-4\gamma}{8}\ge 0\longleftrightarrow \sin\frac{\alpha-2\beta}{2}\cdot\cos{\frac{\alpha+\beta-\gamma}{2}}\ge 0,
\end{align*}
and this is true since $\alpha\ge 2\beta$ and $\alpha+\beta-\gamma<180^{\circ}$.

For the second inequality we use the following equivalences
\begin{align*}
&r(BDF)\le r(ABC) \longleftrightarrow \sin\frac{3\alpha+2\gamma}{8}\cdot\sec\frac{\alpha-4\beta+2\gamma}{8}\le \sin\frac{\alpha}{2}\cdot\sec\frac{\beta-\gamma}{2}\longleftrightarrow\\
\end{align*}
\begin{align*}
&\longleftrightarrow \sin\frac{3\alpha+2\gamma}{8}\cdot\cos\frac{4\beta-4\gamma}{8}\le \sin\frac{4\alpha}{8}\cdot\cos\frac{\alpha-4\beta+2\gamma}{8}\longleftrightarrow\\
&\longleftrightarrow \sin\frac{3\alpha+4\beta-2\gamma}{8}+\sin\frac{3\alpha-4\beta+6\gamma}{8}\le \sin\frac{5\alpha-4\beta+2\gamma}{8}+\sin\frac{3\alpha+4\beta-2\gamma}{8}\longleftrightarrow\\
&\longleftrightarrow \sin\frac{5\alpha-4\beta+2\gamma}{8}-\sin\frac{3\alpha-4\beta+6\gamma}{8}\ge 0 \longleftrightarrow \sin\frac{\alpha-2\gamma}{8}\cos\frac{\alpha-\beta+\gamma}{2}\ge 0,
\end{align*}
which is true since $\alpha\ge 2\beta\ge 2\gamma$ and $\alpha-\beta+\gamma<180^{\circ}$. This completes subcase 2.1.

\medskip

\framebox[1.1\width]{{\bf Subcase 2.2. $2\gamma\le \alpha\le 2\beta$}} \par

\medskip
Using the fact that $\alpha/2\le \beta$ and \eqref{rr} we obtain that $r(BDF)\le r(ADF)$. Similarly, using $\alpha/2\ge \gamma$ and \eqref{rr} we have that $r(CDE)\ge r(ADE)$. Thus we can safely ignore triangles $ADE$ and $BDF$ in this case. To prove \eqref{r2} it would suffice to show that $r(CDE)\le r_1$ and $r(ADF)\le r_1$. Recall that $r_1=r(ACD)=\sin\frac{\alpha+2\beta}{4}\cdot\sec{\frac{\alpha-2\gamma}{4}}$.
 The first inequality is equivalent to
\begin{align*}
&r(CDE)\le r(ACD) \longleftrightarrow \sin\frac{3\alpha+2\beta}{8}\cdot\sec\frac{\alpha+2\beta-4\gamma}{8}\le \sin\frac{\alpha+2\beta}{4}\cdot \sec\frac{\alpha-2\gamma}{4}\longleftrightarrow\\
&\longleftrightarrow \sin\frac{3\alpha+2\beta}{8}\cos\frac{2\alpha-4\gamma}{8}\le \sin\frac{2\alpha+4\beta}{8}\cos\frac{\alpha+2\beta-4\gamma}{8}\longleftrightarrow\\
&\longleftrightarrow \sin\frac{5\alpha+2\beta-4\gamma}{8}+\sin\frac{\alpha+2\beta+4\gamma}{8}\le \sin\frac{3\alpha+6\beta-4\gamma}{8}+\sin\frac{\alpha+2\beta+4\gamma}{8}\longleftrightarrow\\
&\longleftrightarrow \sin\frac{3\alpha+6\beta-4\gamma}{8}-\sin\frac{5\alpha+2\beta-4\gamma}{8}\ge 0 \longleftrightarrow \sin\frac{-\alpha+2\beta}{8}\cos\frac{\alpha+\beta-\gamma}{2}\ge 0,
\end{align*}
which is true since $\alpha\le 2\beta$ and $\alpha+\beta-\gamma<180^{\circ}$.

The second inequality is proved in a similar fashion.
\begin{align*}
&r(ADF)\le r(ACD)\longleftrightarrow \sin\frac{\alpha+4\beta+2\gamma}{8}\cdot \sec{\frac{\alpha-2\gamma}{8}}\le \sin\frac{\alpha+2\beta}{4}\cdot\sec{\frac{\alpha-2\gamma}{4}}\longleftrightarrow\\
&\longleftrightarrow \sin\frac{\alpha+4\beta+2\gamma}{8}\cos\frac{2\alpha-4\gamma}{8}\le \sin\frac{2\alpha+4\beta}{8}\cos\frac{\alpha-2\gamma}{8}\longleftrightarrow\\
&\longleftrightarrow \sin\frac{3\alpha+4\beta-2\gamma}{8}+\sin\frac{-\alpha+4\beta+6\gamma}{8}\le \sin\frac{3\alpha+4\beta-2\gamma}{8}+\sin\frac{\alpha+4\beta+2\gamma}{8}\longleftrightarrow\\
&\longleftrightarrow \sin\frac{\alpha+4\beta+2\gamma}{8}-\sin\frac{-\alpha+4\beta+6\gamma}{8}\ge 0
\longleftrightarrow \sin\frac{\alpha-2\gamma}{8}\cos\frac{\beta+\gamma}{2}\ge 0,
\end{align*}
and this is certainly valid since $\alpha\ge 2\beta\ge 2\gamma$ and $\beta+\gamma=180^{\circ}-\alpha<180^{\circ}$.
This completes the proof of subcase 2.2.

\framebox[1.1\width]{{\bf Subcase 2.3. $\alpha\le 2\gamma$}} \par
\medskip

This is the trickiest subcase.
Since $\alpha/2\le \gamma\le \beta $ it follows from \eqref{rr} that $r(BDF)\le r(ADF)$ and
$r(CDE)\le r(ADE)$. This removes triangles $BDF$ and $CDE$ from further considerations. To prove \eqref{r2} it would suffice
to show that $r(ADE)\le r(ADF)\le \sqrt{3}/2$.

The first inequality is similar to the previous ones
\begin{align*}
&r(ADF)\ge r(ADE) \longleftrightarrow \sin\frac{\alpha+4\beta+2\gamma}{8}\cdot \sec\frac{\alpha-2\gamma}{8}\ge \sin\frac{\alpha+2\beta+4\gamma}{8}\cdot\cos\frac{\alpha-2\beta}{8}\longleftrightarrow\\
&\longleftrightarrow \sin\frac{\alpha+4\beta+2\gamma}{8}\cos\frac{\alpha-2\beta}{8}\ge \sin\frac{\alpha+2\beta+4\gamma}{8}\cos\frac{\alpha-2\gamma}{8}\longleftrightarrow\\
&\longleftrightarrow \sin\frac{2\alpha+2\beta+2\gamma}{8}+\sin\frac{6\beta+2\gamma}{8}\ge \sin\frac{2\alpha+2\beta+2\gamma}{8}+\sin\frac{2\beta+6\gamma}{8}\longleftrightarrow\\
&\longleftrightarrow \sin\frac{3\beta+\gamma}{4}-\sin\frac{\beta+3\gamma}{4}\ge 0\longleftrightarrow 2\sin\frac{\beta-\gamma}{4}\cos\frac{\beta+\gamma}{2}\ge 0,
\end{align*}
and this inequality is obvious since $\beta\ge \gamma$ and $\beta+\gamma<180^{\circ}$.

It remains to show that $r(ADF)\le \sqrt{3}/2$. This proof is slightly different.
Recall that $\alpha/2\le \gamma\le \beta\le \alpha$. Denote $\beta=\gamma+x$ and
$\alpha=\gamma+x+y$ where both $x,y\ge 0$. Since $2\gamma\ge \alpha$ it follows that
$\gamma\ge x+y$ hence denote $\gamma=x+y+z$, where $z\ge 0$. To this end we have the following
\begin{equation*}
\gamma=x+y+z,\quad \beta=2x+y+z,\quad \text{and}\,\,\alpha=2x+2y+z,\qquad \text{where}\,\,x\ge 0, y\ge 0, z\ge 0.
\end{equation*}

One can express $r(ADF)$ in terms of the new variables $x$, $y$ and $z$ as follows
\begin{equation}\label{ADF}
r(ADF)= \sin\frac{\alpha+4\beta+2\gamma}{8}\cdot\sec{\frac{\alpha-2\gamma}{8}}=\sin\frac{12x+8y+7z}{8}\cdot\sec{\frac{z}{8}}
\end{equation}

Since $\alpha+\beta+\gamma=180^{\circ}$ it follows that $5x+4y+3z=180^{\circ}$ which after multiplying both sides by $2.4$ gives
$12x+9.6y+7.2z=432^{\circ}$. From here we obtain that

\begin{equation*}
\frac{12x+8y+7z}{8}\le \frac{12x+9.6y+7.2z}{8}=54^{\circ}.
\end{equation*}
On the other hand, from $5x+4y+3z=180^{\circ}$ we readily obtain $z/8\le 7.5^{\circ}$.
Using the last two inequalities in \eqref{ADF} it follows that
\begin{equation*}
r(ADF)=\sin\frac{12x+8y+7z}{8}\cdot\sec{\frac{z}{8}}\le \sin{54^{\circ}}\cdot \sec{7.5^{\circ}}=0.8159\ldots<\sqrt{3}/2,\,\,\text{as desired}
\end{equation*}
The proof of the last subcase is complete. The main lemma is proved.
\end{proof}

We are now in position to prove a useful corollary. But let us first introduce a new quantity.
\begin{df}
With the notations above let
\begin{equation}\label{rho}
\rho_n:=\max(r_n,r_{n+1},\sqrt{3}/2)
\end{equation}
\end{df}
\begin{cor}\label{corollary}
The sequence $\{\rho_n\}_{n\ge 0}$ is decreasing. That is, $\rho_0\ge \rho_1\ge \ldots \ge \rho_n\ge \rho_{n+1}\ge \ldots$.
\end{cor}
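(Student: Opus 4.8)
The plan is to reduce the monotonicity statement $\rho_n \ge \rho_{n+1}$ to a single inequality and then to let the Main Lemma do all the work. Unwinding the definition \eqref{rho}, we have $\rho_n = \max(r_n, r_{n+1}, \sqrt{3}/2)$ and $\rho_{n+1} = \max(r_{n+1}, r_{n+2}, \sqrt{3}/2)$. The quantities $r_{n+1}$ and $\sqrt{3}/2$ occur in both maxima and each is trivially $\le \rho_n$, so the whole corollary collapses to proving the single inequality
\[
r_{n+2} \le \rho_n = \max(r_n, r_{n+1}, \sqrt{3}/2).
\]

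To establish this I would first note that every $(n+2)$-th generation triangle arises, via two applications of the largest angle bisection procedure, as a grandchild of a unique $n$-th generation triangle. So I would fix an arbitrary $\Delta = \Delta_{ni} \in T_n$ and apply Lemma \ref{mainlemma} with $\Delta$ in the role of the initial triangle $ABC$. Read locally, the lemma asserts that the largest aspect ratio among the four grandchildren of $\Delta$ is at most $\max\big(r(\Delta), r(\Delta'), \sqrt{3}/2\big)$, where $\Delta'$ denotes the child of $\Delta$ of larger aspect ratio, namely the one containing the smallest angle, by \eqref{rr}.

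Next I would take the maximum over all $\Delta \in T_n$. On the left this is exactly $r_{n+2}$, since the grandchildren of the triangles of $T_n$ are precisely the triangles of $T_{n+2}$. On the right, $\max_{\Delta} r(\Delta) = r_n$ by the definition \eqref{rdef}, while $\max_{\Delta} r(\Delta')$ is bounded by $r_{n+1}$: as $\Delta$ ranges over $T_n$ its children range over all of $T_{n+1}$, so no child can have aspect ratio exceeding $r_{n+1}$. This gives $r_{n+2} \le \max(r_n, r_{n+1}, \sqrt{3}/2) = \rho_n$, the required inequality, and hence $\rho_{n+1} \le \rho_n$.

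The argument is essentially bookkeeping once Lemma \ref{mainlemma} is available; the only point demanding care is keeping the local quantities $r_0, r_1, r_2$ appearing inside that lemma distinct from the global sequence $\{r_n\}$, which I would handle through the renaming $r(\Delta), r(\Delta'), r_{n+2}$ used above. I expect no genuine obstacle here, since all the real difficulty — in particular the emergence of the threshold $\sqrt{3}/2$ that makes the sequence $\rho_n$ stabilize — is already absorbed into the case analysis of the Main Lemma.
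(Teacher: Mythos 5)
Your proposal is correct and follows essentially the same route as the paper: both reduce the corollary to the single inequality $r_{n+2}\le\max(r_n,r_{n+1},\sqrt{3}/2)$ and obtain it by applying Lemma \ref{mainlemma} locally to an $n$-th generation triangle viewed as the root of a two-step subtree (the paper traces back from the extremal $(n+2)$-nd generation triangle to its parent and grandparent, while you take a maximum over all grandparents, which is the same bookkeeping). No gap.
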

\begin{proof}
Notice that $\rho_0\ge \rho_1$ is equivalent to $\max(r_0,r_1,\sqrt{3}/2)\ge \max(r_1,r_2,\sqrt{3}/2)$ and this is exactly
what we proved in Lemma \ref{mainlemma}. Let us show that $\rho_{n+1}\le \rho_n$. Obviously, this is equivalent to proving that $r_{n+2}\le \max(r_n,r_{n+1},\sqrt{3}/2)$.

Let $T''$ be the triangle of maximum aspect ratio obtained after the $(n+2)$-nd iteration. In other words, $r(T'')=r_{n+2}$.
Triangle $T''$ has a parent triangle $T'_1$ that was obtained after the $(n+1)$-st iteration; on its turn, $T'_1$ has a parent triangle $T$ that was created after the $n$-th iteration. Let us denote by $T'_2$ be the other triangle created by applying the largest angle bisection procedure to triangle $T$.

One can think of $T'_1$ and $T'_2$ as siblings, both offsprings of $T$. Also, $T'_1$ is the parent of $T''$ while $T'_2$ is the uncle of $T''$. Note that the position of $T''$ within $T$ is irrelevant.

Now by Lemma \ref{mainlemma} it follows that $r(T'')\le \max(r(T),r(T'_1),r(T'_2),\sqrt{3}/2)$. But clearly,\\
$r(T)\le r_n$, $r(T'_1)\le r_{n+1}$ and $r(T'_2)\le r_{n+1}$, as $T$ is an $n$-th generation triangle while both $T'_1$ and $T'_2$ were obtained after the $(n+1)$-st iteration.  It follows that
\begin{equation*}
r_{n+2}=r(T'')\le \max\left(r(T),r(T'_1),r(T'_2),\sqrt{3}/2\right)\le \max(r_n,r_{n+1},\sqrt{3}/2),
\end{equation*}
which is exactly what we wanted to prove.
\end{proof}
\end{section}

\begin{section}{\bf The mesh size lemma}

In this section we prove one intermediate result involving $m_n$, the length of the longest side of
all triangles obtained after applying the largest angle bisection procedure $n$ times.

\begin{lemma}\label{mn+2}
With the notations above, for every $n\ge 0$ we have that
\begin{equation}
\frac{m_{n+2}}{m_{n}}\le \rho_n.
\end{equation}
\end{lemma}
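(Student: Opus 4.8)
The plan is to prove the following sharper, purely local statement: every side of every generation-$(n+2)$ triangle is at most $\rho_n$ times the longest side of its generation-$n$ ancestor. Granting this, if $T''$ is the generation-$(n+2)$ triangle whose longest side realizes $m_{n+2}$ and $T$ is its grandparent, then $m_{n+2}\le \rho_n\cdot L(T)\le \rho_n m_n$, where $L(T)$ denotes the longest side of $T$ and $L(T)\le m_n$ because $T$ is a generation-$n$ triangle. Thus the entire problem reduces to a computation inside one triangle $T$ and its four descendants.

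I would set up $T=ABC$ with $a=BC\ge b=AC\ge c=AB$, so the largest angle $\alpha$ sits at $A$ opposite the longest side $a$, and the bisector $AD$ meets $BC$ at $D$, producing the children $ABD$ and $ACD$. By the angle bisector theorem $BD=ac/(b+c)$ and $DC=ab/(b+c)$, whence $BD,DC\le r(T)\,a$ since $b,c\le a$; this controls the two pieces into which the longest side is split. Inequality \eqref{ADoverBC} of Lemma \ref{lemma1} gives $AD\le(\sqrt3/2)\,a$, and applied again inside each child it bounds every second-generation bisector by $\sqrt3/2$ times that child's longest side, hence by $(\sqrt3/2)\,a$. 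The only remaining grandchild sides are sub-segments of a shorter side cut by the second bisector, or a side of a child that survives undivided.

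The key tool I would isolate is this: if one bisects the largest angle of a triangle $PQR$ whose longest side $QR$ is cut at $W$, then $QW/WR=PQ/PR$ while $r(PQR)=QR/(PQ+PR)$, and combining these gives $QW=PQ\cdot r(PQR)$ and $WR=PR\cdot r(PQR)$. Since $r\le 1$, each new sub-segment is bounded by the adjacent side $PQ$ or $PR$. Applied to $ACD$ (largest angle at $D$, longest side $AC$), the pieces $AE,EC$ of $AC$ satisfy $AE\le AD\le(\sqrt3/2)a$ and $EC\le DC\le r(T)\,a$; the same works for $ABD$ whenever its largest angle lies at $D$. In the complementary case, where the largest angle of $ABD$ is $\beta$ (that is, $\beta\ge \alpha/2+\gamma$), its longest side is the bisector $AD$ itself, so every side of $ABD$ — and therefore every side of both its children — is already at most $AD\le(\sqrt3/2)a$.

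The delicate point, and the main obstacle, is precisely the pieces of the shorter sides created in the second bisection: the naive estimate gives only $AE\le b\le a$, which is worthless when $\rho_n=\sqrt3/2$ and $b$ is close to $a$. The angle-bisector-theorem identity above is exactly what rescues these segments, re-expressing them as an aspect ratio $\le 1$ times an already-controlled side. Collecting all cases, every grandchild side is at most $\max(\sqrt3/2,\,r(T))\,a\le \max(\sqrt3/2,\,r_n)\,a\le \rho_n\,a$, which gives $m_{n+2}\le \rho_n m_n$; the equilateral triangle, for which $AD=(\sqrt3/2)a$ survives as a grandchild side, shows the bound is tight. One curiosity worth recording is that the argument never invokes the $r_{n+1}$ term of $\rho_n$: the genuinely binding quantities are $\sqrt3/2$ and $r_n$.
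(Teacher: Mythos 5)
Your argument is correct, and while it shares the paper's skeleton --- reduce to a single generation-$n$ triangle $T$ and its four grandchildren via the parent/uncle tree, then bound each of the dozen or so segments that occur, using \eqref{ADoverBC} for every bisector and the angle bisector theorem for $BD,CD$ --- it handles the genuinely delicate segments, namely the two pieces of a side cut by a second-generation bisector, by a different and more economical device. The paper treats these case by case: $CE\le CD$ by comparing opposite angles in $CDE$, $AE\le AD$ by identifying the largest angle of $ADE$, $AF\le AE$ by a law-of-sines computation, and $BF\le CE$ by multiplying two angle-bisector-theorem ratios. Your single identity $QW=PQ\cdot r(PQR)$, $WR=PR\cdot r(PQR)$ --- each new piece equals the adjacent side times the aspect ratio of the triangle being cut, hence is at most that adjacent side --- delivers all of these at once with no trigonometry, and it transfers the bound on each piece directly to an already-controlled segment ($AD$, $CD$ or $BD$). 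Your disposal of the case $\beta\ge\alpha/2+\gamma$ is likewise cleaner: since $AD$ is then the longest side of $ABD$, every side of $ABD$ and of both its children is at most $AD\le(\sqrt3/2)\,a$, which absorbs the paper's separate estimates for $AB$, $BD$, $BF$ and the two pieces of $AD$. As a bonus you obtain the slightly stronger local bound $m_{n+2}\le\max\bigl(r_n,\sqrt3/2\bigr)\,m_n$, and your closing remark is accurate: the $r_{n+1}$ term of $\rho_n$ plays no role in this lemma (nor, in fact, in the paper's own estimates here); it is needed only so that Corollary \ref{corollary} can make the sequence $\rho_n$ monotone, which is how the lemma is later iterated. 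The equilateral example, where $AD=(\sqrt3/2)\,a$ survives as a grandchild side, correctly shows the constant cannot be improved.
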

\begin{proof}
Consider first the case $n=0$. We want to show that $m_2\le m_0\cdot \max(r_0,r_1,\sqrt{3}/2)$. Consider the triangle $ABC$ with
sides $a\ge b \ge c$ and angles $\alpha\ge \beta\ge \gamma$. Thus $m_0=a$.

Let $AD$ be the angle bisector of angle $\alpha$. As noticed earlier in \eqref{rr}, $r_1=r(ACD)$. We have two cases depending on whether $\beta \ge \alpha/2+\gamma$ or $\beta \le \alpha/2+\gamma$ - see figure \ref{twocases}.

\begin{figure}[!htb]
\centering
\includegraphics[scale=1]{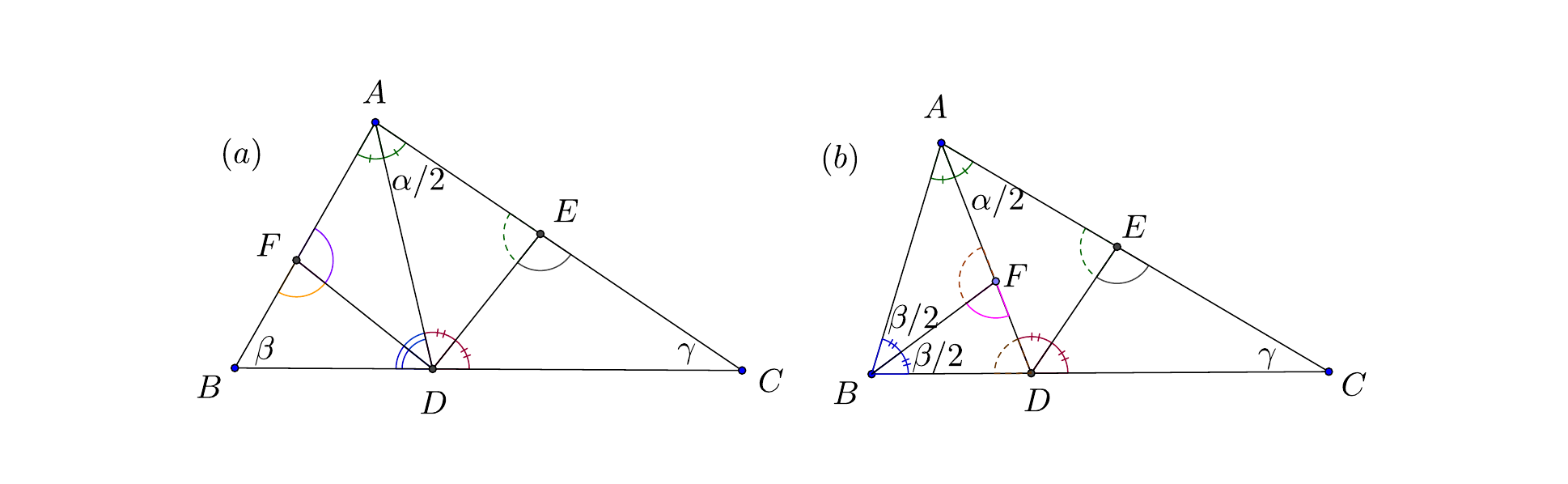}
\vspace{-0.5cm}
\caption{$(a)\,\, \beta\le \alpha/2+\gamma$,$\,\,\quad \beta\ge \alpha/2+\gamma$}
\label{twocases}
\end{figure}

In triangle $ADE$ we have that $AD$ opposes the largest angle of $\alpha/4+\beta/2+\gamma$ - see also figures \ref{case1} or \ref{case2} for a better view. Hence, $AD\ge DE$ and $AD\ge AE$ and since by \eqref{ADoverBC} we have that $AD\le BC\cdot \sqrt{3}/2$ we can safely ignore segments $DE$, $AE$ and $AD$ from future considerations. In triangle $CDE$ we have that $CD\ge CE$
since the angle opposite to $CD$ is larger. Using the angle bisector theorem in triangle $ABC$ we have that
\begin{equation*}
\frac{CD}{m_0}=\frac{CD}{BC}=\frac{AC}{AB+AC}\le \frac{BC}{AB+AC} =\frac{a}{b+c}=r_0,
\end{equation*}
and thus, all the sides of triangles $ADE$ and $CDE$ satisfy the required inequality.

Let us next look at the children of triangle $ABD$. Notice first that from the angle bisector theorem in $ABC$ we have that $BD/CD=AB/AC\le 1$, hence $BD\le CD$.

In the case when $\beta\le \alpha/2+\gamma$ it is not hard to show that
$AF\le AE$ and $BF\le CE$. For the first inequality we use the law of sines in triangles $ADF$ and $ADE$. We have
\begin{align*}
&{AF}\cdot {\sin\angle AFD}=AD \cdot \sin\angle ADF\longrightarrow AF \cdot \sin\frac{\alpha+2\gamma}{4}= AD \cdot \sin\frac{\alpha+4\beta+2\gamma}{4},\\
& AE \cdot {\sin\angle AED}=AD \cdot \sin\angle ADE\longrightarrow AE \cdot \sin\frac{\alpha+2\beta}{4}= AD \cdot \sin\frac{\alpha+2\beta+4\gamma}{4}.
\end{align*}
Combining the above equalities, the desired inequality is equivalent to
\begin{align*}
AF\le AE &\longleftrightarrow \sin\frac{\alpha+2\gamma}{4}\cdot \sin\frac{\alpha+2\beta+4\gamma}{4}\le \sin\frac{\alpha+2\beta}{4}\cdot \sin\frac{\alpha+4\beta+2\gamma}{4}\longleftrightarrow\\
&\longleftrightarrow \cos\frac{\beta+\gamma}{2}-\cos\frac{\alpha+\beta+3\gamma}{2}\le \cos\frac{\beta+\gamma}{2}-\cos\frac{\alpha+3\beta+\gamma}{2}\longleftrightarrow\\
&\longleftrightarrow \sin\beta\ge \sin\gamma,\quad \text{which is true since}\,\, \gamma\le \beta\le 90^{\circ}.
\end{align*}

The inequality $BF\le CE$ is much easier to prove. Using the angle bisector theorem again
in both triangles $ABD$ and $ACD$ we obtain
\begin{equation}
\frac{BF}{CE}=\frac{AF}{AE}\cdot\frac{BD}{CD}\le 1\cdot 1 =1.
\end{equation}
Since we proved earlier that $CE\le CD \le m_0\cdot r_0$ the proof of the first case is complete.

\bigskip

It remains to see what happens if $\beta \ge \alpha+\gamma/2$. Recall that we already dealt with the subtriangles of $ACD$.
Of the five segments appearing among the sides of triangles $ABF$ and $BDF$, $AF$ and $BF$ are clearly shorter than $AD$ and
we already know that $AD\le m_0\cdot\sqrt{3}/2$. Segment $BF$ is the angle bisector corresponding to the largest side of triangle $ABD$ hence by using \eqref{ADoverBC} again we have that $BF\le AD\cdot \sqrt{3}/{2}\le m_0\cdot 3/4$, done.
We showed earlier that $BD\le CD\le m_0\cdot r_0$. Finally, $AB\le AD$ since $\angle ABD$ is the largest one in triangle $ABD$.

It follows that $m_2\le m_0\cdot \max(r_0,r_1,\sqrt{3}/2)$ as desired.

\bigskip

The proof of the general inequality $m_{n+2}\le m_n\cdot \rho_n$ follows the same steps as the proof of Corollary \ref{corollary}.
Let $T''$ be the triangle of maximum edge length obtained after the $(n+2)$-nd iteration. In other words, $m(T'')=m_{n+2}$.
Triangle $T''$ has a parent triangle $T'_1$ that was obtained after the $(n+1)$-st iteration; on its turn, $T'_1$ has a parent triangle $T$ that was created after the $n$-th iteration. Let us denote by $T'_2$ be the other triangle created by applying the largest angle bisection procedure to triangle $T$.

One can think of $T'_1$ and $T'_2$ as siblings, both offsprings of $T$. Also, $T'_1$ is the parent of $T''$ while $T'_2$ is the uncle of $T''$. The first part of the proof implies that

\begin{equation*}
m(T'')\le m(T)\cdot \max(r(T),r(T'_1),r(T'_2),\sqrt{3}/2).
\end{equation*}

But clearly, $r(T)\le r_n$, $r(T'_1)\le r_{n+1}$ and $r(T'_2)\le r_{n+1}$, as $T$ is an $n$-th generation triangle while both $T'_1$ and $T'_2$ were obtained after the $(n+1)$-st iteration. Also, we obviously have $m(T)\le m_n$. It follows that
\begin{equation*}
m_{n+2}=m(T'')\le m(T)\cdot \max\left(r(T),r(T'_1),r(T'_2),\sqrt{3}/2\right)\le m_n\cdot\max(r_n,r_{n+1},\sqrt{3}/2)=m_n\cdot \rho_n,
\end{equation*}
which is exactly what we wanted to prove.
\end{proof}
\end{section}

\begin{section}{\bf Proofs of the last two theorems}

We are finally in position to prove that $m_n\longrightarrow 0$ as $n\longrightarrow \infty$.

\begin{thm}
Let $ABC$ be any triangle. Use the largest angle bisection procedure $n$ times with $ABC$ as the starting triangle.
Let $m_n$ and $r_n$ be the longest side and respectively, the largest aspect ratio over all $n$-th generation triangles.
Then
\begin{equation*}
 m_n\le m_0\cdot \max(r_0,r_1,\sqrt{3}/2)^{\lfloor \frac{n}{2}\rfloor}.
\end{equation*}
\end{thm}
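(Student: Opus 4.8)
The plan is to combine the mesh size lemma (Lemma \ref{mn+2}) with the monotonicity supplied by Corollary \ref{corollary}, and then simply iterate. Write $\rho_0=\max(r_0,r_1,\sqrt 3/2)$, so that the claimed bound reads $m_n\le m_0\cdot\rho_0^{\lfloor n/2\rfloor}$. Corollary \ref{corollary} says the sequence $\{\rho_n\}$ is nonincreasing, hence $\rho_n\le \rho_0$ for every $n\ge 0$. Substituting this into Lemma \ref{mn+2} yields the uniform two-step contraction
\[
m_{n+2}\le \rho_n\,m_n\le \rho_0\,m_n\qquad\text{for all }n\ge 0,
\]
which is the engine of the whole argument.

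From here I would split into the even and odd cases and unwind the recursion. For even $n=2k$ one iterates the contraction $k$ times to get $m_{2k}\le \rho_0^{\,k}m_0$, and since $\lfloor 2k/2\rfloor=k$ this is exactly the stated inequality. For odd $n=2k+1$ one iterates down to $m_{2k+1}\le \rho_0^{\,k}m_1$, so it remains only to dispose of the base case $m_1\le m_0$; combined with $\lfloor (2k+1)/2\rfloor=k$ this again gives the claim. Both unwindings are clean inductions on $k$ whose inductive step is the single contraction above.

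The only point needing a small separate check is the base estimate $m_1\le m_0$, and this is immediate. When $ABC$ is split along the bisector $AD$, the sides of the two children are $AB$, $AC$, $BD$, $CD$, and $AD$. The two pieces of the longest side satisfy $BD+CD=BC=m_0$, so each is at most $m_0$; the sides $AB=c$ and $AC=b$ are at most $a=m_0$ because $a\ge b\ge c$; and $AD\le (\sqrt 3/2)\,BC< m_0$ by \eqref{ADoverBC}. Hence no side of a first-generation triangle exceeds $m_0$, giving $m_1\le m_0$.

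I do not expect a genuine obstacle here: all the difficulty has already been absorbed into Lemma \ref{mainlemma}, which controls the growth of the aspect ratio, and into the mesh size lemma, which converts that control into a contraction of edge lengths. The present statement is the purely arithmetic harvest of those two results, and the main subtlety is merely the bookkeeping of the floor function across the parity of $n$.
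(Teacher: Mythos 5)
Your proposal is correct and follows essentially the same route as the paper: iterate the two-step contraction $m_{n+2}\le\rho_n m_n\le\rho_0 m_n$ furnished by the mesh size lemma and the monotonicity corollary, then handle parity. The only cosmetic difference is that the paper closes the odd case with $m_{2k+1}\le m_{2k}$ whereas you descend to $m_1$ and verify $m_1\le m_0$ directly; both reductions rest on the same observation that bisection never lengthens a side.
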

\begin{proof}
Recall that we introduced the notation $\max(r_0,r_1,\sqrt{3}/2)=\rho_0$.
Using lemma \ref{mn+2} and corollary \ref{corollary} repeatedly we have
\begin{equation*}
m_{2n}\le m_{2n-2}\cdot \rho_{2n-2}\le m_{2n-2}\cdot \rho_0,\,\,m_{2n-2}\le m_{2n-4}\cdot \rho_{2n-4}\le m_{2n-4}\cdot \rho_0,\,\,
\ldots, m_2\le m_0\cdot \rho_0.
\end{equation*}
Multiplying term by term and simplifying we obtain that $m_{2n}\le m_0\cdot \rho_0^n$. Since $m_{2n+1}\le m_{2n}$ we also have
$m_{2n+1}\le m_0\cdot \rho_0^n$. This proves the theorem.
\end{proof}

Finally, we prove that, with one exception, the number of similarity types obtained via repeated
application of the largest angle bisection procedure is unbounded. As before, let $T_n$ be the set
of $2^n$ triangles in the $n$-th generation. Denote by
\begin{equation}\label{An}
\mathcal{A}_n=\{x\,\,|\,\, x\,\, \text{is an angle of some triangle in}\,\, T_n\}.
\end{equation}
We intend to prove that unless the initial triangle is an isosceles right triangle,
the set $\bigcup_{n=0}^{\infty} \mathcal{A}_n$ is infinite.
Let $ABC$ be a triangle with angles $\alpha\ge \beta \ge \gamma$. Apply the largest angle bisection procedure
with $ABC$ as the starting triangle. As noticed in the proof of Theorem \ref{minanglethm}, angle $\gamma$
is never bisected so it ``survives" through the entire process unscathed.

Let $\Upsilon_n$ be the $n$-th generation triangle that contains the angle $\gamma$. It turns out
one can find an explicit expression for the angles of $\Upsilon_n$ for every $n\ge 0$.

Let us introduce the {\it Jacobsthal sequence} $(j_n)_{n\ge 0}: 0,1,1,3,5,11,21,43,\ldots$ defined by the recurrence relation
$j_{n+1}=j_{n}+2j_{n-1}, j_0=0, j_1=1$. The following equalities are easy to derive
\begin{equation}\label{jacobsthal}
j_n=\frac{2^n-(-1)^n}{3},\quad j_n+j_{n+1}=2^n.
\end{equation}

Let us prove the following
\begin{lemma}\label{lemmathetaphi}
Let $\Upsilon_n$ be the triangle in $T_n$ that contains $\gamma$ as one of its angles.
Then for every $n\ge 1$ the other two angles of $\Upsilon_n$ are
\begin{equation}\label{thetaphi}
\theta_n=\frac{j_{n+1}}{2^n}\alpha+\frac{j_n}{2^{n-1}}\beta \quad \text{and}\quad \phi_n=\frac{j_{n}}{2^n}\alpha+\frac{j_{n-1}}{2^{n-1}}\beta.
\end{equation}
Moreover, $\theta_n\ge \phi_n$ and $\theta_n\ge \gamma$.
\end{lemma}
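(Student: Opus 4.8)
The plan is to prove everything simultaneously by induction on $n$, carrying along the slightly stronger hypothesis that $\theta_n$ is the largest angle of $\Upsilon_n$ and that it sits at a vertex \emph{other} than the one bearing $\gamma$. This stronger form is what makes the induction self-propagating: it guarantees that at the next step it is $\theta_n$, and never $\gamma$, that gets bisected. For the base case $n=1$, the triangle retaining the vertex $C$ is $\Upsilon_1=ACD$, whose angles are $\angle DAC=\alpha/2$, $\angle ACD=\gamma$, and $\angle ADC=180^\circ-\alpha/2-\gamma=\alpha/2+\beta$ (using $\alpha+\beta+\gamma=180^\circ$). Reading off $j_0=0,\ j_1=1,\ j_2=1$ shows these match $\phi_1=\alpha/2$ and $\theta_1=\alpha/2+\beta$ in \eqref{thetaphi}, and the inequalities are immediate: $\theta_1-\phi_1=\beta>0$ and $\theta_1=\alpha/2+\beta\ge\beta\ge\gamma$.

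For the inductive step, assume the claim for $n$. Since $\theta_n\ge\phi_n$ and $\theta_n\ge\gamma$, the angle $\theta_n$ is the largest angle of $\Upsilon_n$, located at a vertex distinct from $C$; hence the procedure bisects $\theta_n$, while $\gamma$ survives unscathed as observed before the lemma, and is inherited by exactly one child, namely $\Upsilon_{n+1}$. That child has angle $\gamma$ at $C$, the half-angle $\theta_n/2$ at the bisected vertex, and, at the foot of the bisector, the angle $180^\circ-\gamma-\theta_n/2=\theta_n/2+\phi_n$. Thus
\begin{equation*}
\theta_{n+1}=\frac{\theta_n}{2}+\phi_n,\qquad \phi_{n+1}=\frac{\theta_n}{2}.
\end{equation*}
Substituting the inductive forms \eqref{thetaphi} and collecting coefficients, the Jacobsthal recurrence $j_{k+1}=j_k+2j_{k-1}$ turns $j_{n+1}+2j_n$ into $j_{n+2}$ and $j_n+2j_{n-1}$ into $j_{n+1}$; this routine computation reproduces \eqref{thetaphi} with $n$ replaced by $n+1$.

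Finally I would propagate the two inequalities. From the displayed relations, $\theta_{n+1}-\phi_{n+1}=\phi_n>0$, so $\theta_{n+1}>\phi_{n+1}$, which also shows the new largest angle again lies at the new vertex rather than at $C$, preserving the stronger hypothesis. For the second inequality, since $\theta_{n+1}\ge\phi_{n+1}$ and $\theta_{n+1}+\phi_{n+1}=180^\circ-\gamma$, we get $\theta_{n+1}\ge(180^\circ-\gamma)/2=90^\circ-\gamma/2\ge\gamma$, the last step holding because $\gamma\le 60^\circ$ for the smallest angle of any triangle. The main obstacle is not the algebra but the geometric bookkeeping of the inductive step: one must pin down which of the two children inherits $\gamma$ and verify that $\theta_n$, rather than $\gamma$ or $\phi_n$, is the angle actually bisected at every stage, since the entire recurrence collapses if the wrong angle is split. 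Once that is secured, the Jacobsthal identities make the rest mechanical.
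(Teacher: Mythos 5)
Your proof is correct and follows essentially the same route as the paper: the same induction with base case $\Upsilon_1=ACD$, the same recurrence $\theta_{n+1}=\theta_n/2+\phi_n$, $\phi_{n+1}=\theta_n/2$, and the same use of the Jacobsthal identity $j_{k+1}=j_k+2j_{k-1}$. The only cosmetic difference is that the paper verifies $\theta_n\ge\phi_n$ and $\theta_n\ge\gamma$ directly from the closed-form expressions (via monotonicity of $j_n$ and $j_n+j_{n+1}=2^n$), whereas you propagate these inequalities through the induction; both are valid, and your explicit tracking of which angle is bisected is a sound way to justify the step the paper leaves implicit.
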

\begin{proof}
Notice first that since $(j_n)_{n\ge 0}$ is a nondecreasing sequence the inequality $\theta_n\ge \phi_n$ is immediate.
Also,
\begin{equation*}
\theta_n=\frac{j_{n+1}}{2^n}\alpha+\frac{j_n}{2^{n-1}}\beta\ge \frac{j_{n+1}}{2^n}\gamma+\frac{j_n}{2^{n-1}}\gamma= \frac{j_{n+1}+2j_n}{2^n}\gamma= \frac{2^n+j_n}{2^n}\gamma\ge \gamma.
\end{equation*}
We use induction on $n$. If $n=1$ then $\Upsilon_1 = ACD$, $\theta_1=\alpha/2+\beta$ and $\phi_1=\alpha_2$ - see figure \ref{lemma1fig}.

Suppose triangle $\Upsilon_n$ has angles $\theta_n$, $\phi_n$ and $\gamma$. Since $\theta_n$ is the largest angle of $\Upsilon_n$
we bisect this angle to obtain $\Upsilon_{n+1}$ (and one other triangle but we can ignore that one).
Then it is easy to see that the angles of $\Upsilon_{n+1}$ are $\theta_n/2+\phi_n$, $\theta_n/2$ and $\gamma$ as shown in the figure below.

\begin{figure}[!htb]
\centering
\includegraphics[scale=0.95]{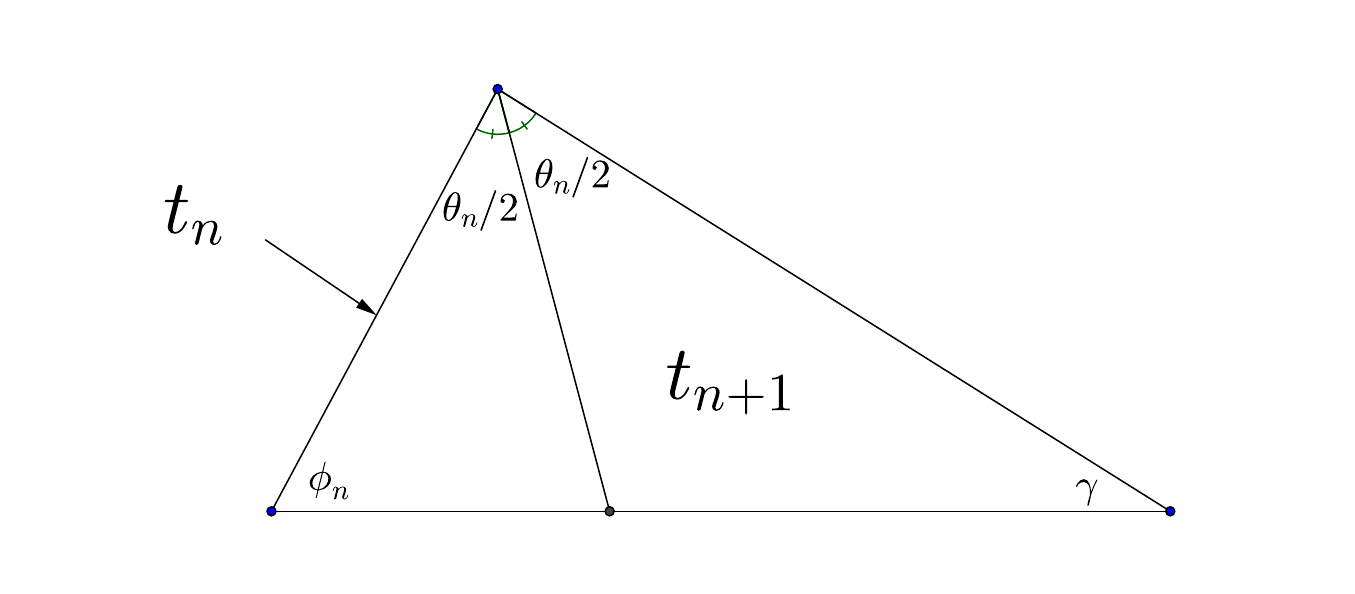}
\vspace{0cm}
\caption{Bisecting $\Upsilon_n$}
\label{Upsilon}
\end{figure}

It is now just a matter of simple algebra to verify that
\begin{equation*}
\theta_n/2+\phi_n=\frac{j_{n+1}+2j_n}{2^{n+1}}\alpha+\frac{j_n+2j_{n-1}}{2^n}\beta=
\frac{j_{n+2}}{2^{n+1}}\alpha+\frac{j_{n+1}}{2^n}\beta=\theta_{n+1},\,\,\text{and}\,\, \frac{\theta_n}{2}=\phi_{n+1}.
\end{equation*}
This completes the proof.
\end{proof}

We need one more result.
\begin{lemma}\label{last}
Let $\alpha$ and $\beta$ be two real positive numbers such that $\alpha\neq 2\beta$. For $n\ge 1$ define
\begin{equation}
\theta_n=\frac{j_{n+1}}{2^n}\alpha+\frac{j_n}{2^{n-1}}\beta. \,\, \text{Then, for all}\,\, p\neq q \,\,\text{we have}\,\, \theta_p\neq\theta_q.
\end{equation}
\end{lemma}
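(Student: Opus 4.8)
The plan is to produce a closed-form expression for $\theta_n$ as a function of the single integer $n$, and then to observe that this function is injective. The crucial observation is that the Jacobsthal ratios entering $\theta_n$ are not independent of $n$ in a complicated way: they collapse into a fixed constant plus a single geometric term, so that the entire $n$-dependence of $\theta_n$ is carried by a power of $-1/2$.

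First I would substitute the explicit formula $j_n=(2^n-(-1)^n)/3$ from \eqref{jacobsthal} into the definition of $\theta_n$. Handling the two coefficients separately yields
\begin{equation*}
\frac{j_{n+1}}{2^n}=\frac{2}{3}+\frac{(-1)^n}{3\cdot 2^n},\qquad \frac{j_n}{2^{n-1}}=\frac{2}{3}-\frac{2(-1)^n}{3\cdot 2^n},
\end{equation*}
and collecting the $\alpha$- and $\beta$-terms gives the compact form
\begin{equation*}
\theta_n=\frac{2}{3}(\alpha+\beta)+\frac{\alpha-2\beta}{3}\left(-\frac{1}{2}\right)^n.
\end{equation*}

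Next I would set $C=\tfrac{2}{3}(\alpha+\beta)$ and $D=\tfrac{1}{3}(\alpha-2\beta)$, so that $\theta_n=C+D\left(-1/2\right)^n$. The hypothesis $\alpha\neq 2\beta$ guarantees $D\neq 0$. Consequently, for $p\neq q$ we have $\theta_p-\theta_q=D\left[\left(-1/2\right)^p-\left(-1/2\right)^q\right]$, so the problem reduces to showing that the map $n\mapsto\left(-1/2\right)^n$ is injective.

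Finally, injectivity follows from the strict monotonicity of the absolute value: since $\left|\left(-1/2\right)^n\right|=2^{-n}$ is strictly decreasing in $n$, distinct indices produce distinct absolute values and hence distinct values of $\left(-1/2\right)^n$. Therefore $\theta_p\neq\theta_q$ whenever $p\neq q$, as claimed. I do not expect a genuine obstacle here; the only step requiring care is the algebraic simplification that exposes the geometric term, where one must track the signs $(-1)^n$ correctly. Once the closed form $\theta_n=C+D\left(-1/2\right)^n$ is established, the conclusion is immediate.
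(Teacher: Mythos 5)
Your proof is correct. It reaches the same conclusion by a mildly different route than the paper: the paper assumes $\theta_p=\theta_q$ with $p\neq q$, combines the resulting linear relation between $\alpha$ and $\beta$ with the identity $j_n+j_{n+1}=2^n$ (which forces the $\alpha$-coefficient to be exactly half the $\beta$-coefficient), and divides the two relations to obtain the contradiction $\alpha=2\beta$. You instead substitute the closed form $j_n=(2^n-(-1)^n)/3$ to get the explicit expression $\theta_n=\tfrac{2}{3}(\alpha+\beta)+\tfrac{1}{3}(\alpha-2\beta)\left(-\tfrac{1}{2}\right)^n$ and conclude from the injectivity of $n\mapsto(-1/2)^n$. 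Both arguments rest on the same two facts about the Jacobsthal numbers, but your version has a small advantage: the paper's division step implicitly requires that the coefficient $\frac{j_q}{2^{q-1}}-\frac{j_p}{2^{p-1}}$ is nonzero for $p\neq q$, a point it does not justify, whereas your closed form makes the nonvanishing of the $n$-dependent term transparent. Your computation of the two coefficients and the sign bookkeeping with $(-1)^{n+1}=-(-1)^n$ are correct, so there is no gap.
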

\begin{proof}

Notice the extra condition $\alpha\neq 2\beta$. Suppose that $p\neq q$ but $\theta_p=\theta_q$. Then we obtain
\begin{equation}\label{pq1}
\theta_p=\theta_q\longleftrightarrow \frac{j_{p+1}}{2^p}\alpha+\frac{j_p}{2^{p-1}}\beta=\frac{j_{q+1}}{2^q}\alpha+\frac{j_q}{2^{q-1}}\beta\longleftrightarrow \left(\frac{j_{p+1}}{2^p}-\frac{j_{q+1}}{2^q}\right)\alpha=\left(\frac{j_{q}}{2^{q-1}}-\frac{j_{p}}{2^{p-1}}\right)\beta.
\end{equation}
On the other hand, we have that
\begin{equation}\label{pq2}
2=\frac{j_p+j_{p+1}}{2^{p-1}}=\frac{j_q+j_{q+1}}{2^{q-1}}\longrightarrow \frac{j_{p+1}}{2^p}-\frac{j_{q+1}}{2^q}=\frac{1}{2}\cdot\left(\frac{j_{q}}{2^{q-1}}-\frac{j_{p}}{2^{p-1}}\right).
\end{equation}

Since $p\neq q$ we can divide equations \eqref{pq1} and \eqref{pq2} term by term to obtain that $\alpha=2\beta$.
But this contradicts the hypothesis. The proof is complete.
\end{proof}

We can now show that the set $\bigcup_{n=0}^{\infty}\mathcal{A}_n$ defined in \eqref{An} is, with one exception, infinite.
If $\alpha\neq 2\beta$ then by combining Lemma \ref{lemmathetaphi} and Lemma \ref{last} we have that the largest angles of
triangles $\Upsilon_n$ are all different and thus we are done.

If $\alpha=2\beta$ then the initial triangle $ABC$ is bisected into two triangles $ABD$ and $ACD$. Triangle $ACD$ has angles $2\beta, \beta, \gamma$ and it is similar to $ABC$; triangle $ABD$ has angles $\beta,\beta, 180^{\circ}-2\beta$. If the largest angle of this triangle is different from twice the middle angle then we apply the reasoning above to $ABD$ and we are done.

The only cases left to consider are those when either $\beta=2(180^{\circ}-2\beta)$ or $180^{\circ}-2\beta=2\beta$.
The first case implies $\beta=72^{\circ}$ and therefore $\alpha=144^{\circ}$ which is clearly impossible since $\alpha+\beta<180^{\circ}$. The second case gives $\beta=45^{\circ}$ and consequently, $\alpha=90^{\circ},\,\gamma=45^{\circ}$.
In this case it is obvious that the largest angle bisection procedure keeps producing isosceles right triangles so we have only one type of triangle up to similarity.

We thus proved the following

\begin{thm} Let $ABC$ be an arbitrary triangle. If $ABC$ is an isosceles right triangle then all
triangles obtained via the largest angle bisection procedure are similar to $ABC$.
Otherwise, the number of different similarity types is at least as large as the number of iterations.
\end{thm}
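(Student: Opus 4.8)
The plan is to split the argument according to whether $\alpha = 2\beta$, relying throughout on the elementary fact that two triangles with different largest angles cannot be similar (similar triangles share the same multiset of angles, hence the same maximum). For the generic case $\alpha \neq 2\beta$, I would first recall from the proof of Theorem \ref{minanglethm} that the smallest angle $\gamma$ is never bisected, so the triangle $\Upsilon_n$ containing $\gamma$ is well defined in every generation. By Lemma \ref{lemmathetaphi} its largest angle is $\theta_n = \frac{j_{n+1}}{2^n}\alpha + \frac{j_n}{2^{n-1}}\beta$, and by Lemma \ref{last} the numbers $\theta_1, \theta_2, \ldots, \theta_n$ are pairwise distinct. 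Since distinct largest angles force distinct similarity classes, the triangles $\Upsilon_1, \ldots, \Upsilon_n \in \bigcup_{k=1}^{n} T_k$ already realize $n$ pairwise non-similar shapes, which yields exactly the claimed lower bound: after $n$ iterations at least $n$ similarity types have appeared, so their number is unbounded.

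For the exceptional case $\alpha = 2\beta$, I would apply one bisection explicitly. The child $ACD$ has angles $2\beta, \beta, \gamma$ and is therefore similar to $ABC$, while the child $ABD$ is isosceles with angles $\beta, \beta, 180^{\circ} - 2\beta$. I would then treat $ABD$ as a fresh initial triangle and ask whether its largest angle is again twice its middle angle; if not, the generic argument applies verbatim to $ABD$ and produces infinitely many types. Avoiding this forces either $180^{\circ} - 2\beta = 2\beta$, giving $\beta = 45^{\circ}$ and hence the isosceles right triangle $\alpha = 90^{\circ}$, $\gamma = 45^{\circ}$, or $\beta = 2(180^{\circ} - 2\beta)$, giving $\beta = 72^{\circ}$ and $\alpha = 144^{\circ}$, which is impossible since then $\alpha + \beta > 180^{\circ}$. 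Finally I would check directly that bisecting the right angle of a $45^{\circ}$-$45^{\circ}$-$90^{\circ}$ triangle produces two congruent $45^{\circ}$-$45^{\circ}$-$90^{\circ}$ triangles, so in this single case every descendant is similar to $ABC$.

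The step I expect to be the main obstacle is making the exceptional-case bookkeeping airtight, namely certifying that the recursion on $ABD$ terminates at the isosceles right triangle and nowhere else. The delicate observation is that the condition \emph{largest angle equals twice the middle angle} is precisely the fixed-point relation under which a child reproduces its parent's shape; the whole finiteness phenomenon therefore reduces to solving this equation within the admissible range $45^{\circ} \le \beta \le 60^{\circ}$ that is forced by $\alpha = 2\beta \ge \beta \ge \gamma \ge 0$, and one must be careful that no other self-similar configuration slips through when $\gamma$ is allowed to degenerate at the endpoints.
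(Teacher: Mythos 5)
Your proposal is correct and follows essentially the same route as the paper: the generic case $\alpha\neq 2\beta$ is handled by combining Lemma \ref{lemmathetaphi} and Lemma \ref{last} to get pairwise distinct largest angles for the triangles $\Upsilon_n$, and the case $\alpha=2\beta$ is reduced by one explicit bisection to the same two fixed-point equations, with $\beta=72^{\circ}$ excluded by $\alpha+\beta<180^{\circ}$ and $\beta=45^{\circ}$ giving the isosceles right triangle. No substantive differences from the paper's argument.
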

\end{section}
\thispagestyle{empty}

\end{document}